\numberwithin{equation}{section}
\newtheorem{thm}{Theorem}[section]
\newtheorem{lem}[thm]{Lemma}
\newtheorem{rem}[thm]{Remark}
\newcommand\reallywidehat[1]{%
	\savestack{\tmpbox}{\stretchto{%
			\scaleto{%
				\scalerel*[\widthof{\ensuremath{#1}}]{\kern-.6pt\bigwedge\kern-.6pt}%
				{\rule[-\textheight/2]{1ex}{\textheight}}
			}{\textheight}%
		}{0.5ex}}%
	\stackon[1pt]{#1}{\tmpbox}%
}
\newcommand{\vertiii}[1]{{\left\vert\kern-0.25ex\left\vert\kern-0.25ex\left\vert #1 
		\right\vert\kern-0.25ex\right\vert\kern-0.25ex\right\vert}}
\newcommand{\R}{{\mathbb R}}
\newcommand{\N}{{\mathbb N}}
\begin{document}
	
	\title[Vlasov-Stokes system]{Periodic Vlasov-Stokes' system: Existence\\ and Uniqueness of strong solutions}
	
	\bibliographystyle{alpha}
	
	\author[Harsha Hutridurga]{Harsha Hutridurga}
	\address{H.H.: Department of Mathematics, Indian Institute of Technology Bombay, Powai, Mumbai 400076 India.}
	\email{hutri@math.iitb.ac.in}
	
	\author[Krishan Kumar]{Krishan Kumar}
	\address{K.K.: Department of Mathematics, Indian Institute of Technology Bombay, Powai, Mumbai 400076 India.}
	\email{krishankumar@math.iitb.ac.in}
	
	\author[Amiya K. Pani]{Amiya K. Pani}
	\address{A.K.P.: Department of Mathematics, Birla Institute of Technology and Science, Pilani, KK Birla Goa Campus, NH 17 B, Zuarinagar, Goa 403726 India.}
	\email{amiyap@goa.bits-pilani.ac.in}
	
	\date{\today}
	
	\thispagestyle{empty}
	

	\maketitle
	
	\begin{abstract}
This paper deals with the Vlasov-Stokes' system in three dimensions with periodic boundary conditions in the spatial variable. We prove the existence of a unique strong solution to this two-phase model under the assumption that initial velocity moments of certain order are bounded. We use a fixed point argument to arrive at a global-in-time solution.
	\end{abstract}
	\section{Introduction}\label{Int}
This paper deals with a coupled system of partial differential equations arising in the study of thin sprays. From a modeling perspective, it is assumed that the spray particles (droplets) are a dispersed phase in a gas medium. Studying two-phase models comprising of a kinetic equation for the dispersed phase and a fluid equation for the gas dates back to the works of O'Rourke \cite{o1981collective} and Williams \cite{williams2018combustion} (see also \cite{caflisch1983dynamic}).
	
We choose to model the three dimensional background fluid by the linear unsteady Stokes' equation and the droplet distribution by the Vlasov equation while the coupling is via a drag term:
	\begin{equation}\label{eq:continuous-model}
		\left\{
		\begin{aligned}
			\partial_t f + v\cdot \nabla_x f + \nabla_v \cdot \Big( \left( \bm{u} - v \right) f \Big) & = 0 \qquad \qquad  \mbox{ in }(0,T)\times\Omega_x\times\R^3,
			\\
			f(0,x,v) & = f_{0}(x,v)\quad  \mbox{ in }\Omega_x\times\R^3.
		\end{aligned}
		\right.
	\end{equation}
	\begin{equation}\label{contstokes}
		\left\{
		\begin{aligned}
			\partial_t \bm{u} - \Delta_x \bm{u} +\nabla_x p & = \int_{\R^2}\left(v - \bm{u}\right)f\,{\rm d}v  \quad\quad \mbox{ in }\,\,\,(0,T)\times\Omega_x,
			\\
			\nabla_x \cdot \bm{u} & = 0 \qquad\qquad\qquad\qquad\,\, \mbox{ in }\,\,\,\Omega_x,
			\\
			\bm{u}(0,x) &= \bm{u_0}(x) \qquad\qquad\qquad\quad \mbox{in}\,\,\,\Omega_x.
		\end{aligned}
		\right.
	\end{equation}	
	Here $\Omega_x$ denotes the three dimensional torus $\mathbb{T}^3$. The unknowns in the above coupled system are the following: the fluid velocity $\bm{u}(t,x)$, the fluid pressure $p(t,x)$, the droplet distribution function $f(t,x,v)$. We impose periodic boundary conditions in the $x$ variable. The above model with homogeneous Dirichlet boundary condition for the fluid velocity and with specular reflection boundary condition for the droplet distribution was studied by Hamdache in \cite{Hamdache_1998}, wherein he proved the existence of global-in-time weak solutions. H\"ofer studied the Vlasov-steady Stokes' system in \cite{hofer2018} with compactly supported initial data in the phase space. Various other kinetic-fluid equations have been studied in the literature: Vlasov-Burgers' equations \cite{domelevo1999existence, goudon2001asymptotic}; Vlasov-Euler equations \cite{baranger2006coupling}; Vlasov-Navier-Stokes' equations \cite{boudin2009global, chae2011global, yu2013global, han2019uniqueness}, to name a few.
	
	In this paper, we make precise the notion of strong solutions to our system \eqref{eq:continuous-model}-\eqref{contstokes}. Using (i) certain a priori bounds coming from the energy identity, (ii) the regularity theory for the Stokes' equation, (iii) the DiPerna-Lions' theory for the well-posedness of the transport equation with Sobolev vector fields and (iv) a fixed point argument, we prove the global-in-time well-posedness result for the fluid-kinetic system \eqref{eq:continuous-model}-\eqref{contstokes}. The aforementioned a priori bounds have been known since the work of Hamdache \cite{Hamdache_1998}. In most of the works on existence and uniqueness of solutions mentioned above, a standard assumption on the initial droplet distribution is that its velocity moments up to certain order are bounded. More precisely, one assumes
\[
\int_{\Omega_x} \int_{\R^d} \left\vert v\right\vert^k f_0(x,v)\, {\rm d}v\, {\rm d}x \le C,
\]
where the order $k$ typically depends on the dimension $d$ that one is considering. A conventional result is then to show that similar bounds hold for velocity moments of the droplet distribution at later times as well. In this work, we also assume that the velocity moments associated with the first-order derivatives of the initial droplet distribution are also bounded and that this property is propagated in time. More precisely, we assume that
\[
\int_{\Omega_x}\int_{\R^3}|v|^p \vert \nabla_xf_0\vert^2 \,{\rm d}v\,{\rm d}x + \int_{\Omega_x}\int_{\R^3}|v|^p \vert \nabla_vf_0\vert^2 \,{\rm d}v\,{\rm d}x \leq C.
\]
This particular assumption is inspired by the work of M. Chae et al. \cite{chae2011global}. 

Our arguments leading to the application of the Banach fixed theorem goes in parallel to the arguments that can be found in the work of Yu \cite{yu2013global} addressing the well-posedness of the Vlasov-Navier-Stokes' system in two dimensions. We would like to point out that there is a minor gap in one of the arguments of \cite{yu2013global} which we highlight and fix in this article. We thank Cheng Yu for discussing this minor issue with us and for suggesting a way to fix that error as well (Cheng Yu, personal communication, August 17, 2021). It should, however, be noted that our proof requires the aforementioned velocity moment bounds associated with the first-order derivatives which wasn't used in \cite{yu2013global}. We believe that, with only of the assumptions made in \cite{yu2013global}, it may not be possible to close this line of argument via the contraction map (see Remark \ref{rem:Yu-mistake}).

	\section{Well-posedness result}\label{cts}
	
	We set the local density $\rho$ and the local macroscopic velocity $V$ as
	\[
	\rho(t,x) = \int_{\R^3} f(t,x,v)\,{\rm d}v  \quad \mbox{and} \quad V(t,x) = \frac{1}{\rho}\int_{\R^3} f(t,x,v)v\,{\rm d}v.
	\]	
	In what follows, we denote the $k^{th}$ order velocity moments by
	\[
	m_kf(t,x) = \int_{\R^3}|v|^kf(t,x,v)\,{\rm d}v, \quad \mbox{for} \quad k \in \N\cup \{0\}.
	\]
	Through out this paper, we use standard notation for Sobolev spaces. We denote by $W^{m,p}$ the $L^p$-Sobolev space of order $m \geq 0$. We take $\bm{W^{m,p}} = \left(W^{m,p}(\Omega_x)\right)^3,$  $\, \forall\,\, m \geq 0,\,\, 1 \leq p \leq \infty$. We also use the standard notations $H^s=W^{s,2}$ and $\bm{H^s} = \bm{W^{s,2}}$. We further denote a special class of divergence-free (in the sense of distribution) vector fields by
	\[
	\bm{J_1} = \left\{\bm{z \in H^{1}}: \nabla_x\cdot \bm{z} = 0, \bm{z} \,\, \mbox{is periodic} \right\}.
	\] 
	Throughout this manuscript, any function defined on $\Omega_x$ is assumed to be periodic in the $x$-variable.
	\subsection{Notion of solution and main result} 
	
	We say that $(f,\bm{u},p)$ is a {\bf strong solution} to the Vlasov-Stokes' system \eqref{eq:continuous-model}-\eqref{contstokes} if	
	\begin{itemize}
		\item $f\in W^{1,1}(0,T;W^{1,1}(\Omega_x\times \R^3))\cap L^\infty(0,T;L^1(\Omega_x\times \R^3)\cap L^\infty(\Omega_x\times \R^3))$
		\item $\bm{u}\in L^\infty(0,T;\bm{J_1})\cap L^2(0,T;\bm{H^2})\cap H^1(0,T;\bm{L^2})$
		\item $p\in L^2(0,T;H^1(\Omega_x)/\R)$
		\item $(f,\bm{u},p)$ satisfies the equations \eqref{eq:continuous-model} and \eqref{contstokes} in almost everywhere sense (in the phase space) for almost all time $t\in(0,T]$.
	\end{itemize}


\begin{thm}\textbf{(Existence and Uniqueness of strong solution)}\label{thm:exist-strong}
	Let the initial datum $f_0$ be such that
	\begin{align}
	& f_0\ge0,\label{initial-1}
	\\
	&f_0 \in L^1(\Omega_x\times\R^3)\cap L^\infty(\Omega_x\times\R^3)\cap H^1(\Omega_x\times\R^3),\label{initial-2}
	\\
	&\int_{\Omega_x}\int_{\R^3}|v|^p\left\{ f_0 + \vert \nabla_xf_0\vert^2 + \vert \nabla_vf_0\vert^2 \right\}\,{\rm d}v\,{\rm d}x \leq C,\label{initial-3}
	\end{align}
	for $0 \leq p \leq 9+\delta$ with $\delta > 0$ and let the initial datum $\bm{u_0}\in \bm{H^2}\cap\bm{J_1}$. 
	Then, there exists a unique global-in-time strong solution $(f,\bm{u},p)$ to the Vlasov-Stokes' system \eqref{eq:continuous-model}-\eqref{contstokes}. Furthermore, 
	\begin{align}\label{eq:moment-bounds}
	\int_{\Omega_x}\int_{\R^3}|v|^p\left\{ f + \vert \nabla_xf\vert^2 + \vert \nabla_vf\vert^2 \right\}\,{\rm d}v\,{\rm d}x \leq C,
	\end{align}
	for $0 \leq p \leq 9+\delta$ with $\delta > 0$ and for all $t>0$.
\end{thm}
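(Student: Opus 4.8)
The overall strategy is a Banach fixed-point argument on the fluid velocity. The plan is to build a map $\mathcal{T}: \bm{w} \mapsto \bm{u}$ on a suitable closed ball of $X_T := L^\infty(0,T;\bm{J_1})\cap L^2(0,T;\bm{H^2})\cap H^1(0,T;\bm{L^2})$ as follows. Given a divergence-free field $\bm{w}$ in the ball, first solve the linear transport (Vlasov) equation
\begin{equation*}
\partial_t f + v\cdot\nabla_x f + \nabla_v\cdot\big((\bm{w}-v)f\big) = 0, \qquad f(0)=f_0,
\end{equation*}
which has a well-defined solution by the DiPerna--Lions theory since $\bm{w}-v$ is a Sobolev (indeed, here $W^{1,\infty}_x$ once we have $\bm{w}\in L^2(0,T;\bm{H^2})\hookrightarrow L^2(0,T;\bm{W^{1,\infty}})$) and divergence-free in $(x,v)$; the flow is measure-preserving up to the $e^{3t}$ factor coming from $\nabla_v\cdot(\bm{w}-v)=-3$. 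From $f$ one forms the source term $\int_{\R^3}(v-\bm{w})f\,{\rm d}v = m_1 f\cdot\tfrac{V}{|V|}\cdots$, more precisely $j_f - \rho\bm{w}$ with $j_f := \int v f\,{\rm d}v$, and then solves the linear Stokes system \eqref{contstokes} with this right-hand side to produce $\bm{u} = \mathcal{T}(\bm{w})$. A fixed point of $\mathcal{T}$ is exactly a strong solution.

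The key steps, in order, are: (1) \emph{A priori estimates from the energy identity}: multiply the Vlasov equation by $|v|^2/2$ and the Stokes equation by $\bm{u}$, integrate, and use Hamdache's identity to bound $\|\bm{u}\|_{L^\infty_t\bm{L^2}} + \|\nabla_x\bm{u}\|_{L^2_t\bm{L^2}}$ and the kinetic energy $\int |v|^2 f$ in terms of the initial data, uniformly in $T$. (2) \emph{Moment propagation}: by a Gronwall argument on $\tfrac{d}{dt}m_k f$ (integrated in $x$), using that $\int_{\R^3}(v-\bm{w})\cdot v|v|^{k-2}f \le C(\|\bm{w}\|\, m_{k-1}f - m_kf + \text{(gain term)})$ together with interpolation $m_{k-1}f \lesssim (m_kf)^{\theta}\rho^{1-\theta}$ and the $L^\infty$ bound on $f$ to control $\rho$, one propagates $\int |v|^p f$ for $0\le p\le 9+\delta$; the threshold $p\ge 9+\delta$ (i.e.\ $p > d^2$ with $d=3$) is exactly what makes $\rho \in L^\infty_t L^\infty_x$ via the standard estimate $\|\rho\|_{L^\infty_x} \lesssim \|f\|_{L^\infty}^{p/(p+3)}(m_pf)^{3/(p+3)}$, and hence $j_f \in L^\infty_{t,x}$, which feeds the Stokes regularity. (3) \emph{Maximal regularity for Stokes}: with right-hand side in $L^\infty(0,T;\bm{L^2})$ (even $L^2(0,T;\bm{L^2})$ suffices, but $L^\infty$ is cleaner), the parabolic regularity theory for the periodic Stokes operator gives $\bm{u}\in L^2(0,T;\bm{H^2})\cap H^1(0,T;\bm{L^2})\cap L^\infty(0,T;\bm{H^1})$ with a quantitative bound; this is what closes the self-map property of $\mathcal{T}$ on the ball, after choosing the ball radius in terms of the data and using the uniform-in-$T$ nature of the energy bound to make the argument global. (4) \emph{Derivative-moment propagation}: differentiate the Vlasov equation in $x_i$ and in $v_i$, multiply by $|v|^p\partial_{x_i}f$ (resp.\ $|v|^p\partial_{v_i}f$), sum, integrate; the commutator terms produce $\nabla_x\bm{w}\cdot\nabla_v f$ which is controlled by $\|\nabla_x\bm{w}\|_{L^\infty_x}\lesssim \|\bm{w}\|_{\bm{H^2}}\in L^2_t$, so Gronwall in $t$ propagates \eqref{eq:moment-bounds}; the assumption \eqref{initial-3} on $|v|^p|\nabla_x f_0|^2, |v|^p|\nabla_v f_0|^2$ enters precisely here and, as the authors note, is what their argument needs beyond \cite{yu2013global}. (5) \emph{Contraction}: for $\bm{w}_1,\bm{w}_2$ in the ball with solutions $f_1,f_2,\bm{u}_1,\bm{u}_2$, estimate $f_1-f_2$ in $L^\infty_t L^1\cap L^\infty_t L^2$ via the transport equation for the difference (the difference solves a transport equation with source $\nabla_v\cdot((\bm{w}_1-\bm{w}_2)f_2)$), getting $\|f_1-f_2\|\lesssim \|\bm{w}_1-\bm{w}_2\|$ with a constant $\times\,T^{\alpha}$ or $\times$(small energy), then feed $j_{f_1}-j_{f_2}-(\rho_1\bm{w}_1-\rho_2\bm{w}_2)$ into Stokes to get $\|\bm{u}_1-\bm{u}_2\|_{X_T} \le \kappa\|\bm{w}_1-\bm{w}_2\|_{X_T}$ with $\kappa<1$, first on a short time interval and then bootstrapped to all of $[0,T]$ using the uniform a priori bounds; Banach's theorem then yields the unique fixed point, and uniqueness of the full system follows by the same difference estimate.

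The main obstacle is step (2)–(3), i.e.\ propagating a high enough velocity moment ($p$ up to $9+\delta$) \emph{with a constant that does not blow up as $T\to\infty$} and simultaneously keeping the Stokes source bounded well enough to run the maximal-regularity estimate — the circularity between "$\bm{u}$ regular $\Rightarrow$ $f$ has good moments" and "$f$ has good moments $\Rightarrow$ source regular $\Rightarrow$ $\bm{u}$ regular" must be broken inside the fixed-point ball, not after. This is also, I expect, where the gap in \cite{yu2013global} alluded to in the introduction lives: closing the contraction estimate requires controlling $\|j_{f_1}-j_{f_2}\|$, which needs a bound on $m_1(f_1-f_2)$ or on $(\rho_1+\rho_2)|\bm{w}_1-\bm{w}_2|$ in a space compatible with Stokes, and naively this needs moment control on the \emph{difference} $f_1-f_2$ — the extra derivative-moment assumption \eqref{initial-3} and its propagation \eqref{eq:moment-bounds} are what let one trade that for an $\bm{H^1}$-type bound on $f_1-f_2$ and thereby on $\nabla(f_1-f_2)$, which is enough. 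The remaining steps (DiPerna--Lions well-posedness of the linear transport problem, standard energy identity, periodic Stokes regularity) are essentially off-the-shelf and I would cite them rather than reprove them.
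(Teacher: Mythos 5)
Your overall architecture is the same as the paper's: a Banach fixed point for the map $\bm{u}^*\mapsto\bm{u}$ obtained by solving Vlasov with drift $\bm{u}^*-v$ (via DiPerna--Lions), feeding $\rho V-\rho\bm{u}$ into the unsteady Stokes problem, propagating velocity moments of $f$, $|\nabla_xf|^2$, $|\nabla_vf|^2$, and closing a short-time contraction followed by continuation. However, two of your steps contain genuine gaps. First, in your step (4) you control the commutator term by $\|\nabla_x\bm{w}\|_{L^\infty_x}\lesssim\|\bm{w}\|_{\bm{H^2}}$; this embedding is false on $\T^3$ ($H^2$ only gives $\nabla\bm{w}\in L^6$). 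The derivative-moment propagation (Lemma \ref{mkmmm}) genuinely needs $\bm{u}\in L^\infty(0,T;\bm{W^{1,\infty}})$, and the paper obtains this not from the fixed-point norm but from Stokes regularity with source $\rho V-\rho\bm{u}\in L^\infty(0,T;L^q)$ for some $q>3$, so that $W^{2,q}\hookrightarrow W^{1,\infty}$; this is exactly what the $(9+\delta)$-th moment buys via Lemma \ref{density}. Relatedly, your claim that $p=9+\delta$ forces $\rho\in L^\infty_{t,x}$ through the interpolation $\rho\lesssim\|f\|_{L^\infty}^{p/(p+3)}(m_pf)^{3/(p+3)}$ is not correct: since $m_pf$ is only controlled in $L^1_x$, this yields $\rho\in L^{(p+3)/3}_x$, never $L^\infty_x$. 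The $L^\infty$ bound on $\rho$ in the paper comes from a separate hypothesis on $f_0$ (Lemma \ref{lem:rho}), not from moment bounds.

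Second, and more seriously, your contraction step (5) does not confront the actual difficulty, which is the one the paper identifies as the misstep in \cite{yu2013global}. To close the Stokes estimate on $\bar{\bm{u}}$ one needs weighted $L^1$ bounds of the form $\int|v|^k|\bar f|$ for $k$ up to $6$ (to control $\|\int v\bar f\,{\rm d}v\|_{L^2_x}$ and $\|\int\bar f\,{\rm d}v\|_{L^3_x}$), and the recurrence for these cannot be derived by testing the difference equation against $|v|^k\operatorname{sgn}(\bar f)$ as if $\bar f$ were nonnegative. The paper regularizes $|\bar f|$ by $\sqrt{\bar f^2+\delta}$, and the resulting error terms involve $\delta\,\nabla_v\bar f/(\bar f^2+\delta)^{3/2}$; showing these vanish as $\delta\to0$ is precisely where the boundedness of the velocity moments of $|\nabla_vf_i|^2$ --- i.e.\ hypothesis \eqref{initial-3} and its propagation \eqref{eq:moment-bounds} --- enters the contraction argument. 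Your sketch asserts an estimate $\|f_1-f_2\|_{L^\infty_tL^1\cap L^\infty_tL^2}\lesssim\|\bm{w}_1-\bm{w}_2\|$ and a vaguer ``trade for an $\bm{H^1}$-type bound on $f_1-f_2$,'' but without the regularization device and the weighted moments of the difference, the plan stalls at exactly the point it is meant to fix.
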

The proof of the above result goes via the following steps:
\begin{itemize}
	\item A bound on the $(9+\delta)^{\textrm{th}}$ order velocity moment, with $\delta > 0$, of $f$ helps to deduce $\bm{u}\in L^\infty(0,T;\bm{W^{1,\infty}})$, thanks to Stokes' regularity \cite{giga1991abstract, amrouche1991existence}.
	\item Using $\bm{u}\in L^\infty(0,T;\bm{W^{1,\infty}})$, we prove that the velocity moments of $\vert \nabla_xf\vert^2$ and $\vert \nabla_vf\vert^2$ stay bounded for all time if they are bounded initially. This is essentially the assertion \eqref{eq:moment-bounds} in the statement of Theorem \ref{thm:exist-strong}. The essential ideas of this step is an adaptation of the calculations in \cite[Theorem 5, p.2462]{chae2011global} and \cite[Lemma 3.2, p.11]{kang2019well}.
	\item Using DiPerna-Lions theory \cite{diperna1989ordinary} for well-posedness of the transport equations and using a certain recurrence relation involving velocity moments, we conclude existence and uniqueness of solution to the Vlasov-Stokes' system by employing the Banach fixed-point theorem in the Banach space $L^\infty(0,T;\bm{J_1})\cap L^2(0,T;\bm{H^2})$. This step is inspired by the work of Goudon \cite{goudon2001asymptotic} on the Vlasov-Burgers' equations and by the work of Yu \cite{yu2013global} on the Vlasov-Navier-Stokes' equations.
\end{itemize}

\begin{rem}
H\"ofer in \cite{hofer2018} proves the existence and uniqueness of the solution to the $3D$ Vlasov-Stokes' equation while considering the steady Stokes' equation for the background fluid medium. He proves the existence of unique solution $(f, \bm{u}) \in W^{1,\infty}((0,T)\times \R^3 \times \R^3) \times \left(L^\infty(0,T;\bm{W^{2,\infty}}) \cap \bm{W^{1,\infty}}((0,T)\times\R^3)\right)$ for the initial data $f_0(x,v) \in W^{1,\infty}(\R^3\times\R^3)$ with compact support. To proof in \cite{hofer2018} goes via a fixed point argument in the Banach space $W^{1,\infty}((0,T)\times \R^3 \times \R^3)$. The assumption of the $W^{1,\infty}$ data having compact support implies that the velocity moments of any arbitrary order are bounded. Hence it is more restrictive compared to the present setting of this article.
\end{rem}

\subsection{Qualitative and quantitative aspects of the model problem}
Next, we recall a result that yields bound on the $L^\infty$-norm of the local density. This estimate is important while addressing the well-posedness of the Stokes' system.
\begin{lem}\label{lem:rho}
		Let $\bm{u} \in L^1(0,T;\bm{L^\infty})$. Let $f_0$ be such that  $\sup_{C^r_{t,v}}f_0 \in L^\infty_{loc}\left(\R_+;L^1(\R^3)\right)$, where $C^r_{t,v} := \Omega_x \times B(e^tv,r), \, \forall\, r > 0$. Here $B(e^tv,r)$ denotes the ball of radius $r$ with center at $e^tv$. Then, the following estimate holds:
		\begin{equation}\label{rholinf}
			\|\rho(t,x)\|_{L^{\infty}((0,T]\times\Omega_x)} \leq e^{3T}\sup_{t \in [0,T]}\|\sup_{C^r_{t,v}}f_0\|_{L^1(\R^3)}.
		\end{equation}     	
	\end{lem}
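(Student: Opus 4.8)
The plan is to represent $f$ along the characteristic flow of the transport equation \eqref{eq:continuous-model}, insert that formula into $\rho(t,x)=\int_{\R^3}f(t,x,v)\,{\rm d}v$, and bound the integrand pointwise by $\sup_{C^r_{t,v}}f_0$. The advantage of this route is that it needs no control of a change-of-variables Jacobian.

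First I would recast \eqref{eq:continuous-model} in non-divergence form: since $\bm u=\bm u(t,x)$ is independent of $v$, one has $\nabla_v\cdot(\bm u-v)=-3$, so the equation becomes $\partial_t f+v\cdot\nabla_x f+(\bm u-v)\cdot\nabla_v f=3f$. Let $s\mapsto(X(s;t,x,v),V(s;t,x,v))$ be the characteristics, i.e.\ the solution of $\dot X=V$, $\dot V=\bm u(s,X)-V$ with $(X(t),V(t))=(x,v)$. Along such a curve $\frac{{\rm d}}{{\rm d}s}\big(e^{-3s}f(s,X(s),V(s))\big)=0$, and evaluating at $s=0$ and $s=t$ yields
\[
f(t,x,v)=e^{3t}\,f_0\big(X(0;t,x,v),V(0;t,x,v)\big).
\]
When $\bm u$ is merely $L^1(0,T;\bm{L^\infty})$ this identity is to be understood via the DiPerna--Lions theory \cite{diperna1989ordinary}, $e^{-3t}f$ being constant along the regular Lagrangian flow (obtained after regularizing $\bm u$); in the regime $\bm u\in L^\infty(0,T;\bm{W^{1,\infty}})$ in which the lemma is ultimately used, $\bm u$ is Lipschitz in $x$, the characteristics are classical, and the formula holds verbatim.

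Next I would estimate the velocity displacement of a characteristic. Writing $\dot V=\bm u(s,X)-V$ as $\frac{{\rm d}}{{\rm d}s}\big(e^{s}V\big)=e^{s}\bm u(s,X(s))$ and integrating from $t$ down to $0$,
\[
V(0;t,x,v)=e^{t}v-\int_0^t e^{\sigma}\bm u\big(\sigma,X(\sigma)\big)\,{\rm d}\sigma ,
\]
hence $\big|V(0;t,x,v)-e^{t}v\big|\le e^{t}\int_0^t\|\bm u(\sigma)\|_{\bm{L^\infty}}\,{\rm d}\sigma\le e^{T}\|\bm u\|_{L^1(0,T;\bm{L^\infty})}=:r$ (any larger value of $r$ also works, and $r<\infty$ by hypothesis). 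Since $X(0;t,x,v)\in\Omega_x=\mathbb{T}^3$ automatically, we get $\big(X(0;t,x,v),V(0;t,x,v)\big)\in C^r_{t,v}$ for every $(t,x,v)$ with $t\le T$.

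Finally, combining the two facts, for each $t\in(0,T]$ and $x\in\Omega_x$,
\[
\rho(t,x)=\int_{\R^3}f(t,x,v)\,{\rm d}v=e^{3t}\int_{\R^3}f_0\big(X(0;t,x,v),V(0;t,x,v)\big)\,{\rm d}v\le e^{3t}\int_{\R^3}\Big(\sup_{C^r_{t,v}}f_0\Big)\,{\rm d}v,
\]
so that $e^{3t}\le e^{3T}$ together with $\int_{\R^3}\big(\sup_{C^r_{t,v}}f_0\big)\,{\rm d}v\le\sup_{t\in[0,T]}\|\sup_{C^r_{t,v}}f_0\|_{L^1(\R^3)}$ gives \eqref{rholinf}. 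I expect the only delicate point to be the rigorous justification of the representation formula under the weak hypothesis $\bm u\in L^1(0,T;\bm{L^\infty})$ (which calls for a renormalization/approximation argument), whereas for a Lipschitz-in-$x$ field it is immediate; everything else is a one-line ODE estimate and an integration in $v$, the key structural point being that the supremum over $C^r_{t,v}$ simultaneously absorbs the $x$-dependence of $X(0;t,x,v)$ and the $v$-shift of $V(0;t,x,v)$.
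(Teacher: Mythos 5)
Your argument is correct and is essentially the proof the paper relies on: the paper does not prove Lemma \ref{lem:rho} itself but defers to \cite[Proposition 4.6]{han2019uniqueness}, whose proof is exactly this representation $f(t,x,v)=e^{3t}f_0\big(X(0;t,x,v),V(0;t,x,v)\big)$ along characteristics combined with the displacement bound $\big|V(0;t,x,v)-e^{t}v\big|\le e^{T}\|\bm u\|_{L^1(0,T;\bm{L^\infty})}=:r$, which places $(X(0),V(0))$ in $C^r_{t,v}$. Your caveat about justifying the representation formula for a merely $L^1(0,T;\bm{L^\infty})$ field is well placed, and is harmless here since the lemma is invoked for $\bm u^*\in X\subset L^2(0,T;\bm{H^2})$, where the DiPerna--Lions flow is available.
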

	The proof of the above result can be found in \cite[Proposition 4.6, p.44]{han2019uniqueness}. The following result gathers certain properties of solutions to the two-phase model \eqref{eq:continuous-model}-\eqref{contstokes}, the proof of which can be found in \cite{Hamdache_1998}. Hence we skip its proof.
	\begin{lem}
		Any strong solution $(f,\bm{u},p)$ to the Vlasov-Stokes' system \eqref{eq:continuous-model}-\eqref{contstokes} has the following properties:
		\begin{enumerate}
			\item \textbf{Positivity preserving:} For any non-negative initial data $f_0$, the solution $f$ is also non-negative.
			\item \textbf{Mass conservation:} The distribution function $f$ conserves the total mass in the following sense:
			\begin{equation*}
				\int_{\R^3}\int_{\Omega_x}\,f(t,x,v)\,{\rm d}x\,{\rm d}v = \int_{\R^3}\int_{\Omega_x}\,f_0(x,v)\,{\rm d}x\,{\rm d}v, \quad t \in [0,T].
			\end{equation*}
			\item \textbf{Total momentum conservation:} The distribution function $f$ and the fluid velocity $\bm{u}$ together conserve total momentum in the following sense: for all $t \in [0,T]$,
			\begin{equation*}
				\int_{\R^3}\int_{\Omega_x} vf(t,x,v)\,{\rm d}x\,{\rm d}v + 2\int_{\Omega_x} \bm{u}(t,x)\,{\rm d}x = \int_{\R^3}\int_{\Omega_x} vf_0(x,v)\,{\rm d}x\,{\rm d}v + 2\int_{\Omega_x}\bm{u_0}(x)\,{\rm d}x.
			\end{equation*}
			\item \textbf{Energy dissipation:} For any non-negative initial data $f_0$, the total energy of the Vlasov-Stokes' system \eqref{eq:continuous-model}-\eqref{contstokes} dissipates in time, i.e.
			\begin{equation*}
			\frac{{\rm d}}{{\rm d}t}\left(\int_{\R^3}\int_{\Omega_x}|v|^2f(t,x,v)\,{\rm d}x\,{\rm d}v + \int_{\Omega_x}\bm{u}^2\,{\rm d}x \right) \leq 0.
		\end{equation*}
		\end{enumerate}
	\end{lem}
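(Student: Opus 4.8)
The plan is to establish the four assertions by testing each equation against a suitable weight and exploiting the divergence structure of the Vlasov operator on the torus. Positivity I would obtain by characteristics; mass and momentum conservation by integrating the Vlasov equation against $1$ and $v$ and combining with the integrated Stokes equation; and energy dissipation by testing the Vlasov equation with $|v|^2$ and the Stokes equation with $\bm u$. Throughout, the strong-solution regularity $f\in W^{1,1}(0,T;W^{1,1}(\Omega_x\times\R^3))\cap L^\infty(0,T;L^1\cap L^\infty)$ together with the moment bounds \eqref{eq:moment-bounds}, and $\bm u\in L^\infty(0,T;\bm{J_1})\cap L^2(0,T;\bm{H^2})\cap H^1(0,T;\bm{L^2})$, is exactly what licenses differentiation under the integral sign, the integrations by parts in $v$, and the vanishing of all spatial boundary terms by periodicity on $\mathbb{T}^3$.

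For positivity I would rewrite the Vlasov equation in non-conservative form, using $\nabla_v\cdot(\bm u-v)=-3$, as
\[
\partial_t f + v\cdot\nabla_x f + (\bm u - v)\cdot\nabla_v f = 3f .
\]
Since $\bm u\in L^\infty(0,T;\bm{W^{1,\infty}})$, the phase-space field $(x,v)\mapsto(v,\bm u(t,x)-v)$ is Lipschitz for a.e.\ $t$, so its flow $(X,V)(t;x,v)$ is well defined; along each characteristic $\tfrac{d}{dt}f(t,X,V)=3f$, whence $f(t,X,V)=e^{3t}f_0\ge0$. (Equivalently, one may invoke the DiPerna–Lions renormalization to propagate the sign.) Mass conservation then follows by integrating the conservative form over $\Omega_x\times\R^3$: the free-transport term $v\cdot\nabla_x f=\nabla_x\cdot(vf)$ integrates to zero by periodicity, and the drag term $\nabla_v\cdot((\bm u-v)f)$ integrates to zero by the divergence theorem in $v$, the moment bounds providing the decay needed to discard the flux at $|v|\to\infty$.

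For momentum I would multiply the Vlasov equation by $v$ and integrate: the transport contribution is a perfect $x$-divergence and drops, while integrating the drag by parts in $v$ (using $\nabla_v v_i=e_i$) gives $\tfrac{d}{dt}\!\int\!\int vf=\int\!\int(\bm u-v)f$. Integrating the Stokes equation over $\Omega_x$, the viscous term $\Delta_x\bm u$ and the pressure gradient $\nabla_x p$ both integrate to zero by periodicity, leaving $\tfrac{d}{dt}\!\int\bm u=\int\!\int(v-\bm u)f$. Adding these identities, the momentum-exchange terms cancel, yielding conservation of the total momentum. For energy dissipation I would test the Vlasov equation with $|v|^2$ and the Stokes equation with $\bm u$: integrating the drag by parts with $\nabla_v|v|^2=2v$ gives $\tfrac{d}{dt}\!\int\!\int|v|^2 f=2\int\!\int v\cdot(\bm u-v)f$, while the Stokes test gives $\tfrac12\tfrac{d}{dt}\!\int|\bm u|^2+\int|\nabla_x\bm u|^2=\int\!\int\bm u\cdot(v-\bm u)f$, the pressure term dropping by $\nabla_x\cdot\bm u=0$. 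Summing, the two coupling terms combine into $-2\int\!\int|v-\bm u|^2 f$, so that
\[
\tfrac{d}{dt}\Big(\int\!\int|v|^2 f+\int|\bm u|^2\Big)=-2\int|\nabla_x\bm u|^2-2\int\!\int|v-\bm u|^2 f\le 0 .
\]

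The only genuinely delicate point is the justification of the integrations by parts in $v$ and of interchanging time-differentiation with the phase-space integrals: one must verify that the fluxes $v_i(\bm u-v)f$ and $|v|^2(\bm u-v)f$ vanish at infinity. This is precisely where \eqref{eq:moment-bounds} enters, the control of moments of $f$ and of $|\nabla_v f|^2$ forcing the required decay in $v$, while the $W^{1,1}$-in-time regularity of $f$ makes the a.e.\ equations integrable in $t$. I would render these steps rigorous by truncating in $v$ and passing to the limit with the moment control (or by arguing on the renormalized/characteristic formulation), exactly as in Hamdache \cite{Hamdache_1998}.
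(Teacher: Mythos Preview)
The paper does not actually prove this lemma; it defers entirely to Hamdache \cite{Hamdache_1998} and skips the argument. Your sketch follows the standard route---characteristics for positivity, and testing the Vlasov equation against $1$, $v$, $|v|^2$ paired with integrating the Stokes equation and testing it against $\bm u$---which is precisely what one finds in that reference and what underlies the identity~\eqref{ene} quoted immediately after the lemma. So you are in line with the paper's intent.

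Two small points are worth flagging. First, your momentum computation correctly yields
\[
\frac{{\rm d}}{{\rm d}t}\left(\int_{\R^3}\int_{\Omega_x} vf\,{\rm d}x\,{\rm d}v + \int_{\Omega_x}\bm u\,{\rm d}x\right)=0,
\]
whereas the stated lemma carries a factor of $2$ on the $\int_{\Omega_x}\bm u$ term; your derivation does not reproduce that factor, and you should either note this as an apparent misprint in the statement or explain where an extra factor would come from. Second, in the energy step you record the Stokes identity as $\tfrac12\tfrac{{\rm d}}{{\rm d}t}\!\int|\bm u|^2+\int|\nabla_x\bm u|^2=\int\!\int\bm u\cdot(v-\bm u)f$; to combine it with the Vlasov identity $\tfrac{{\rm d}}{{\rm d}t}\!\int\!\int|v|^2 f=2\int\!\int v\cdot(\bm u-v)f$ and arrive at $-2\int\!\int|v-\bm u|^2 f$, you must multiply the Stokes identity by $2$ before summing (so that the resulting energy is $\int\!\int|v|^2 f+\int|\bm u|^2$, matching both the lemma and~\eqref{ene}). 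With these cosmetic adjustments your argument is complete and matches the cited reference.
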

While proving the aforementioned energy dissipation property in \cite{Hamdache_1998}, Hamdache derives the following identity:
\begin{equation}\label{ene}
		\begin{aligned}
			\frac{1}{2}&\left(\int_{\R^3}\int_{\Omega_x} |v|^2f(t,x,v)\,{\rm d}x\,{\rm d}v  + \int_{\Omega_x}\bm{u}^2\,{\rm d}x\right) + \int_0^t\int_{\Omega_x}|\nabla_{x}\bm{u}|^2\,{\rm d}x\,{\rm d}t 
			\\
			&+ \int_0^t\int_{\R^3}\int_{\Omega_x}|\bm{u} - v|^2f\,{\rm d}x\,{\rm d}v\,{\rm d}t = \frac{1}{2}\int_{\R^3}\int_{\Omega_x} |v|^2\,f_0\,{\rm d}x\,{\rm d}v + \frac{1}{2}\int_{\Omega_x}\bm{u_0}^2\,{\rm d}x.
		\end{aligned}  
	\end{equation}
This helps us to deduce that
	\begin{equation}\label{uL2}
		\bm{u} \in L^\infty(0,T;\bm{L^2}) \quad \mbox{and} \quad \bm{u} \in L^2(0,T;\bm{J_1})
	\end{equation}
	provided $|v|^2f_0 \in L^1(\Omega_x \times \R^3)$ and $\bm{u_0 \in L^2}$.\\
	Now, an application of the  Sobolev imbedding yields $H^1(\Omega_x)  \subset L^p(\Omega_x), 2 \leq p \leq 6$. Therefore,
	\begin{equation}\label{uLp}
		\bm{u} \in L^2(0,T;\bm{L^p}) \quad \mbox{for} \quad 2 \leq p \leq 6.
	\end{equation} 
	The following result shows integrability estimates on the local density and the local momentum. As these appear as source terms in the Stokes' equation, these estimates are crucial in deducing the regularity of solutions to the Stokes' problem. The proof of the following result can be found in \cite[Lemma 2.2, p.56]{Hamdache_1998}.
	\begin{lem}\label{density}
		Let $p \geq 1$. Let $\bm{u} \in L^2(0,T;\bm{L^{p+3}}), f_0 \in L^\infty(\Omega_x \times \R^3)\cap L^1(\Omega_x \times \R^3)$ and let
		\[
		\int_{\R^3}\int_{\Omega_x}\,|v|^pf_0\,{\rm d}x{\rm d}v <\infty.
		\] 
		Then the local density $\rho$ and the local momentum $\rho V$ satisfy the following:
		\begin{equation}\label{rhos}
			\rho \in L^\infty\left(0,T;L^\frac{p+3}{3}(\Omega_x)\right) \quad \mbox{and} \quad \rho V \in L^\infty\left(0,T; L^\frac{p+3}{4}(\Omega_x)\right).
		\end{equation}
	\end{lem}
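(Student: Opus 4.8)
The plan is to combine the transported structure of \eqref{eq:continuous-model}, which yields an a priori $L^\infty$ bound on $f$, with a differential inequality for the weighted moment $M_p(t):=\int_{\Omega_x}\int_{\R^3}|v|^pf(t,x,v)\,{\rm d}v\,{\rm d}x$, and then to read off the two integrability statements from an elementary interpolation estimate for velocity integrals of $f$. For the $L^\infty$ bound, note that since $\nabla_v\cdot(\bm u-v)=-3$, equation \eqref{eq:continuous-model} in non-conservative form reads $\partial_tf+v\cdot\nabla_xf+(\bm u-v)\cdot\nabla_vf=3f$, so along the characteristics of the vector field $(v,\bm u-v)$ one has $\tfrac{{\rm d}}{{\rm d}t}f=3f$; consequently $\|f(t)\|_{L^\infty(\Omega_x\times\R^3)}\le e^{3t}\|f_0\|_{L^\infty(\Omega_x\times\R^3)}$ for all $t\in[0,T]$.

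The elementary interpolation I would use is: for $f\ge0$ and $0\le q\le p$, splitting $\int_{\R^3}|v|^qf\,{\rm d}v$ over $\{|v|\le R\}$ (bounded by $C\|f\|_{L^\infty}R^{q+3}$) and $\{|v|>R\}$ (bounded by $R^{q-p}m_pf$) and optimising over $R>0$ gives
\begin{equation*}
\int_{\R^3}|v|^qf\,{\rm d}v\le C\,\|f\|_{L^\infty}^{\frac{p-q}{p+3}}\,\big(m_pf\big)^{\frac{q+3}{p+3}}.
\end{equation*}
To propagate $M_p$, multiply \eqref{eq:continuous-model} by $|v|^p$ and integrate over $\Omega_x\times\R^3$: the transport term vanishes by periodicity in $x$, and an integration by parts in $v$ in the friction term yields
\begin{equation*}
\frac{{\rm d}}{{\rm d}t}M_p(t)=-p\,M_p(t)+p\int_{\Omega_x}\int_{\R^3}|v|^{p-2}(v\cdot\bm u)f\,{\rm d}v\,{\rm d}x\le p\int_{\Omega_x}|\bm u|\Big(\int_{\R^3}|v|^{p-1}f\,{\rm d}v\Big){\rm d}x.
\end{equation*}
Applying the interpolation with $q=p-1$ and then H\"older's inequality with exponents $p+3$ and $(p+3)/(p+2)$ bounds the right-hand side by $C\,\|\bm u\|_{\bm{L^{p+3}}}\,\|f\|_{L^\infty}^{1/(p+3)}\,M_p^{(p+2)/(p+3)}$. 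Discarding the favourable term $-pM_p$, setting $N:=M_p^{1/(p+3)}$ and inserting the $L^\infty$ bound, I get $N'(t)\le C_T\,\|\bm u(t)\|_{\bm{L^{p+3}}}$; integrating in time and using $\|\bm u\|_{L^1(0,T;\bm{L^{p+3}})}\le\sqrt T\,\|\bm u\|_{L^2(0,T;\bm{L^{p+3}})}<\infty$ shows $M_p\in L^\infty(0,T)$.

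The two claimed bounds then follow by using the interpolation once more, now with $q=0$ and $q=1$. Indeed $\rho=\int_{\R^3}f\,{\rm d}v\le C\|f\|_{L^\infty}^{p/(p+3)}(m_pf)^{3/(p+3)}$, so $\int_{\Omega_x}\rho^{(p+3)/3}\,{\rm d}x\le C\|f\|_{L^\infty}^{p/3}M_p$; and $|\rho V|\le\int_{\R^3}|v|f\,{\rm d}v\le C\|f\|_{L^\infty}^{(p-1)/(p+3)}(m_pf)^{4/(p+3)}$, so $\int_{\Omega_x}|\rho V|^{(p+3)/4}\,{\rm d}x\le C\|f\|_{L^\infty}^{(p-1)/4}M_p$. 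Combining with $\|f(t)\|_{L^\infty}\le e^{3t}\|f_0\|_{L^\infty}$ and $M_p\in L^\infty(0,T)$ established above gives \eqref{rhos}. The crux of the argument is the moment step: closing the estimate requires a \emph{sublinear} power of $M_p$ on the right-hand side of the moment identity (here the exponent $(p+2)/(p+3)<1$), and it is precisely the interpolation against $\|f\|_{L^\infty}$ that supplies this, with the exponent $p+3$ in the hypothesis $\bm u\in L^2(0,T;\bm{L^{p+3}})$ being the one forced by the H\"older pairing. The only other point needing care, that $t\mapsto M_p(t)$ is absolutely continuous and that the integration by parts in $v$ is legitimate despite $|v|^p$ being unbounded, is handled by the usual device of first working with truncated moments $\int_{\Omega_x}\int_{\R^3}|v|^p\varphi(|v|/n)f\,{\rm d}v\,{\rm d}x$ for a smooth cutoff $\varphi$, deriving the inequalities uniformly in $n$, and letting $n\to\infty$ by monotone convergence.
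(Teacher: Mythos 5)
Your proof is correct and follows the standard moment-interpolation argument: the paper itself does not reproduce a proof but defers to Hamdache's Lemma 2.2, whose argument is precisely this combination of the $L^\infty$ bound $\|f(t)\|_{L^\infty}\le e^{3t}\|f_0\|_{L^\infty}$, the differential inequality for $M_p$ closed by the sublinear exponent $(p+2)/(p+3)$, and the splitting $\int|v|^qf\,{\rm d}v\le C\|f\|_{L^\infty}^{(p-q)/(p+3)}(m_pf)^{(q+3)/(p+3)}$. The same splitting device appears verbatim in the paper's own proof of Lemma \ref{mkmmm} (estimates \eqref{moe} and \eqref{moe1}), so your route is exactly the intended one.
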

	\begin{rem}
		Setting  $p = 3$ in the Lemma \ref{density} shows 
		\begin{equation}\label{rhos1}
			\rho \in L^\infty\left(0,T;L^2(\Omega_x)\right) \quad \mbox{and} \quad \rho V \in L^\infty\left(0,T; L^\frac{3}{2}(\Omega_x)\right).
		\end{equation}
A use of the Stokes' regularity result yields
\begin{equation}
	\begin{aligned}
		\bm{u} \in L^2(0,T;\bm{W}^{2,\frac{3}{2}}).
	\end{aligned}
\end{equation}
An application of the Sobolev inequality shows
\begin{equation}
	\bm{u} \in L^2(0,T;\bm{L^p}) \quad \mbox{for} \quad \frac{3}{2} \leq p < \infty.
\end{equation}
\end{rem}

\begin{rem}
	Choosing  $p = 5$ in Lemma \ref{density}, we arrive at 
	\begin{equation}\label{rhos1}
		\rho \in L^\infty\left(0,T;L^\frac{8}{3}(\Omega_x)\right) \quad \mbox{and} \quad \rho V \in L^\infty\left(0,T; L^2(\Omega_x)\right).
	\end{equation}
	A use of the Stokes' regularity result shows
	\begin{equation}
		\begin{aligned}
			\bm{u} \in H^1(0,T;\bm{L^2})\cap L^2(0,T;\bm{H^2}) \cap L^\infty(0,T;\bm{H^1}).
		\end{aligned}
	\end{equation}
\end{rem}

\begin{rem}
	Set $p = 9+\delta$ with $\delta > 0$ in the Lemma \ref{density} to obtain 
	\begin{equation}\label{rhos1}
		\rho \in L^\infty\left(0,T;L^\frac{12+\delta}{3}(\Omega_x)\right) \quad \mbox{and} \quad \rho V \in L^\infty\left(0,T; L^\frac{12+\delta}{4}(\Omega_x)\right).
	\end{equation}
	A use of the Stokes' regularity result yields
	\begin{equation}
		\begin{aligned}
			\bm{u} \in L^\infty(0,T;\bm{W^{1,\infty}}).
		\end{aligned}
	\end{equation}
\end{rem}

The following Lemma shows the propagation of velocity moments which is crucial for the proof of Theorem \ref{thm:exist-strong}. The proof of the assertion \eqref{eq:moment-bounds} made in Theorem \ref{thm:exist-strong} is entrusted to the following Lemma.
\begin{lem}\label{mkmmm}
	Let $\bm{u} \in L^\infty(0,T;\bm{W^{1,\infty}})$ and let $f_0\ge0$ be such that
	\[
	\int_{\Omega_x}\int_{\R^3}|v|^k\{f_0 + |\nabla_xf_0|^2 + |\nabla_vf_0|^2\}\,{\rm d}v\,{\rm d}x \leq C,
	\]
	for $0 \leq k \leq 9+\delta$ with $\delta > 0$. Then, the solution $f$ of the Vlasov equation satisfies
	\[
	\int_{\Omega_x}\int_{\R^3}|v|^k\{f + |\nabla_xf|^2 + |\nabla_vf|^2\}\,{\rm d}v\,{\rm d}x \leq C
	\]
	for $0 \leq k \leq 9+\delta$ with $\delta > 0$ and for all $t > 0$. Furthermore, there hold for $k \geq 1$,
	\begin{equation}\label{momk}
		\begin{aligned}
			\sup_{t \in [0,T]}\int_{\Omega_x}m_kf\,{\rm d}x & + k\int_0^T\int_{\Omega_x}m_kf\,{\rm d}x\,{\rm d}t 
			\\
			&\leq k\|\bm{u}\|_{L^1(0,T;\bm{L^\infty})}\sup_{t \in [0,T]}\int_{\Omega_x}m_{k-1}f\,{\rm d}x 
			+ \int_{\Omega_x}m_kf_0\,{\rm d}x,
		\end{aligned}
	\end{equation}
	\begin{equation}\label{moe}
		\|m_0f\|^3_{L^3(\Omega_x)} \leq C\int_{\Omega_x}\int_{\R^3}|v|^6f{\rm d}v\,{\rm d}x,
	\end{equation} 	
\begin{equation}\label{moe1}
	 \|m_1f\|^2_{L^2(\Omega_x)} \leq C\int_{\Omega_x}\int_{\R^3}|v|^5f{\rm d}v\,{\rm d}x.
\end{equation}
\end{lem}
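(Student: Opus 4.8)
The plan is to prove the three displayed estimates \eqref{momk}, \eqref{moe}, \eqref{moe1} by direct computation, and then to propagate the weighted $H^1$-moments of $f$ by differentiating the Vlasov equation in $x$ and in $v$ and running weighted energy estimates. All manipulations below are performed on smooth solutions and transferred to the strong solution by mollifying the data and passing to the limit, using weak lower semicontinuity of the weighted $L^1$ and $L^2$ norms; the regularity and the polynomial-in-$v$ decay of $f$ and $\nabla f$ needed for this are supplied by the characteristic representation $f(t,\cdot)=e^{3t}f_0\circ\Phi_t^{-1}$, where $\Phi_t$ is the flow of the field $(v,\bm{u}(t,x)-v)$, which is well defined under the standing hypothesis $\bm{u}\in L^\infty(0,T;\bm{W^{1,\infty}})$ since that field is globally Lipschitz in $x$, affine in $v$, and has constant divergence $-3$. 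In particular $\|f(t)\|_{L^\infty}\le e^{3T}\|f_0\|_{L^\infty}$.

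To obtain \eqref{momk}, write the Vlasov equation \eqref{eq:continuous-model} as $\partial_tf+v\cdot\nabla_xf+(\bm{u}-v)\cdot\nabla_vf=3f$, multiply by $|v|^k$, and integrate over $\Omega_x\times\R^3$. The transport term vanishes by periodicity, and integrating the drag term by parts in $v$ with $\nabla_v|v|^k=k|v|^{k-2}v$ yields $\frac{{\rm d}}{{\rm d}t}\int_{\Omega_x}m_kf\,{\rm d}x+k\int_{\Omega_x}m_kf\,{\rm d}x=k\int_{\Omega_x}\int_{\R^3}|v|^{k-2}(v\cdot\bm{u})f\,{\rm d}v\,{\rm d}x$. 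Bounding the right-hand side by $k\|\bm{u}(t)\|_{\bm{L^\infty}}\int_{\Omega_x}m_{k-1}f\,{\rm d}x$ and integrating in time gives \eqref{momk}. Boundedness of $\int_{\Omega_x}m_kf\,{\rm d}x$ on $[0,T]$ for every $k\in[0,9+\delta]$ then follows by induction on $\lceil k\rceil$: the base case is mass conservation, the elementary bound $|v|^k\le1+|v|^{\lceil k\rceil}$ reduces a real index to the next integer one, and each step uses $\|\bm{u}\|_{L^1(0,T;\bm{L^\infty})}<\infty$ together with the hypothesis $\int_{\Omega_x}m_kf_0\,{\rm d}x\le C$. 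Estimates \eqref{moe} and \eqref{moe1} are pointwise-in-$x$ interpolation inequalities: for \eqref{moe}, split $m_0f(x)=\int_{|v|\le R}f\,{\rm d}v+\int_{|v|>R}f\,{\rm d}v\le c\,\|f(t)\|_{L^\infty}R^3+R^{-6}m_6f(x)$ and optimize over $R>0$ to get $m_0f(x)\le C\,\|f(t)\|_{L^\infty}^{2/3}\,(m_6f(x))^{1/3}$; cubing, integrating in $x$, and using $\|f(t)\|_{L^\infty}\le e^{3T}\|f_0\|_{L^\infty}$ gives \eqref{moe}; the same argument with $\int_{|v|\le R}|v|f\,{\rm d}v\le c\,\|f(t)\|_{L^\infty}R^4$ and $\int_{|v|>R}|v|f\,{\rm d}v\le R^{-4}m_5f(x)$ gives $m_1f(x)\le C\,\|f(t)\|_{L^\infty}^{1/2}\,(m_5f(x))^{1/2}$ and hence \eqref{moe1}.

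For the propagation of the weighted derivative moments, differentiate the Vlasov equation in $x_i$ and in $v_j$ to get $\partial_t(\partial_{x_i}f)+v\cdot\nabla_x(\partial_{x_i}f)+(\bm{u}-v)\cdot\nabla_v(\partial_{x_i}f)=3\,\partial_{x_i}f-\partial_{x_i}\bm{u}\cdot\nabla_vf$ and $\partial_t(\partial_{v_j}f)+v\cdot\nabla_x(\partial_{v_j}f)+(\bm{u}-v)\cdot\nabla_v(\partial_{v_j}f)=4\,\partial_{v_j}f-\partial_{x_j}f$. Multiplying the first by $|v|^k\partial_{x_i}f$, the second by $|v|^k\partial_{v_j}f$, summing over $i$ respectively $j$, and integrating over $\Omega_x\times\R^3$ (the transport terms vanish by periodicity; the drag terms are integrated by parts in $v$ exactly as above, producing the factor $k|v|^{k-2}(v\cdot\bm{u})$ and lower-order multiples of $|v|^k$), one reaches, with $A_k(t):=\int|v|^k|\nabla_xf|^2$ and $B_k(t):=\int|v|^k|\nabla_vf|^2$, a differential inequality of the shape $\frac{{\rm d}}{{\rm d}t}(A_k+B_k)\le C(1+\|\bm{u}\|_{\bm{W^{1,\infty}}})(A_k+B_k+A_{k-1}+B_{k-1})$. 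Here the term $k|v|^{k-2}(v\cdot\bm{u})|\nabla_xf|^2$ is bounded, using $|v|^{k-2}|v\cdot\bm{u}|\le|v|^{k-1}|\bm{u}|$, by $k\|\bm{u}\|_{\bm{L^\infty}}|v|^{k-1}|\nabla_xf|^2$ (note the extra factor $v$, so no singularity at $v=0$ arises when $k\ge1$) and accounts for the index-$(k-1)$ terms, while the forcing terms $-\partial_{x_i}\bm{u}\cdot\nabla_vf$ and $-\partial_{x_j}f$ are controlled by $\|\nabla_x\bm{u}\|_{\bm{L^\infty}}$ and Young's inequality and absorbed into $A_k+B_k$. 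For $k=0$ the index-$(k-1)$ terms are absent (coefficient $k=0$), the inequality closes, and Gr\"onwall, using $\bm{u}\in L^\infty(0,T;\bm{W^{1,\infty}})$ and the finiteness of $A_0(0)+B_0(0)$, bounds $A_0+B_0$ on $[0,T]$. One then iterates in $k$ (each time $A_k(0)+B_k(0)$ is finite by hypothesis): once $A_{k-1}+B_{k-1}$ is bounded, Gr\"onwall bounds $A_k+B_k$; real indices in an interval $(m,m+1)$ are reduced to the integer endpoints via $|v|^k\le|v|^m+|v|^{m+1}$, and indices in $(9,9+\delta]$ are reached by H\"older interpolation of the $|v|$-weight between $9$ and $9+\delta$, so that $[0,9+\delta]$ is exhausted in finitely many steps. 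Combined with the bound on $\int_{\Omega_x}m_kf\,{\rm d}x$, this yields the claimed estimate on $\int|v|^k\{f+|\nabla_xf|^2+|\nabla_vf|^2\}$.

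The main obstacle I anticipate is not the algebra but the rigour of this last step: one must verify that $\partial_{x_i}f$ and $\partial_{v_j}f$ genuinely solve the differentiated transport equations in a suitable sense and decay fast enough in $v$ for every boundary term in the integrations by parts to vanish. This is dealt with by carrying out all the estimates on mollified initial data, or directly via the Cauchy--Lipschitz flow representation above (which propagates $H^1$-regularity as well as the polynomial-in-$v$ decay of $\nabla f_0$), and then passing to the limit with the help of weak lower semicontinuity of the weighted norms; the only mildly delicate algebraic point, the weight $|v|^{k-2}$ near $v=0$, is harmless precisely because it always occurs multiplied by $v$.
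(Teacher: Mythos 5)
Your proposal is correct and follows essentially the same route as the paper: the identity obtained by multiplying the Vlasov equation by $|v|^k$ for \eqref{momk}, the split of $\int f\,{\rm d}v$ over $\{|v|<R\}$ and $\{|v|\ge R\}$ with optimization in $R$ for \eqref{moe}--\eqref{moe1}, and weighted energy estimates on the equations for $\nabla_x f$ and $\nabla_v f$ closed by Gr\"onwall for the propagation of derivative moments. The only (immaterial) difference is bookkeeping: you use the weight $|v|^k$ and induct on $k$ to control the lower-order term $k\|\bm{u}\|_{\bm{L^\infty}}|v|^{k-1}|\nabla f|^2$, whereas the paper uses the weight $1+|v|^k$ together with $|v|^{k-1}\le\frac{k-1}{k}|v|^k+\frac1k$ to absorb it directly at each fixed $k$.
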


\begin{proof}
	Consider the equation for $\frac{\partial f}{\partial x_i}$:
	\begin{equation*}
		\partial_t\frac{\partial f}{\partial x_i} + v\cdot \nabla_x\frac{\partial f}{\partial x_i} + \nabla_v\cdot \left(\frac{\partial {\bm u}}{\partial x_i} f\right) + \nabla_v\cdot\left(\left(\bm{u} - v\right)\frac{\partial f}{\partial x_i}\right) = 0,
	\end{equation*}
	for $i=1,2,3$. Multiplying the above vector equation by $\left(1 + |v|^k\right)\nabla_xf$ and integrating with respect to $x,v$ yields
	\begin{equation*}
		\begin{aligned}
			\frac{1}{2}\frac{{\rm d}}{{\rm d}t}\int_{\Omega_x}\int_{\R^3}\left(1 + |v|^k\right)|\nabla_xf|^2\,{\rm d}v\,{\rm d}x = I_1 + I_2 + I_3
		\end{aligned}
	\end{equation*} 
	where
	\begin{align*}
		I_1 & = -\int_{\Omega_x}\int_{\R^3}\left(1 + |v|^k\right)\nabla_x\bm{u}\nabla_xf\cdot \nabla_vf\,{\rm d}v\,{\rm d}x,
		\\
		I_2 & =  3\int_{\Omega_x}\int_{\R^3}\left(1 + |v|^k\right)|\nabla_xf|^2\,{\rm d}v\,{\rm d}x,
		\\
		I_3 & = -\frac{1}{2}\int_{\Omega_x}\int_{\R^3}\left(1 + |v|^k\right)\left(\bm{u} - v\right)\cdot\nabla_v\left(|\nabla_xf|^2\right)\,{\rm d}v\,{\rm d}x.
	\end{align*}
	After using Young's inequality in $I_1$, we obtain
	\begin{equation*}
		\begin{aligned}
			I_1 \le \|\nabla_x\bm{u}\|_{\bm{L^\infty}}\int_{\Omega_x}\int_{\R^3}\left(1 + |v|^k\right)\left(|\nabla_xf|^2 + |\nabla_vf|^2\right)\,{\rm d}v\,{\rm d}x.
		\end{aligned}
	\end{equation*}
	An integration by parts yields
	\begin{equation*}
		\begin{aligned}
			I_3 = -\int_{\Omega_x}\int_{\R^3}\left(1 + |v|^k\right)|\nabla_xf|^2\,{\rm d}v\,{\rm d}x + I_4
		\end{aligned}
	\end{equation*}
	with
	\begin{equation*}
		I_4 =  \frac{k}{2}\int_{\Omega_x}\int_{\R^3}\,|v|^{k-2}v\cdot\left(\bm{u} - v\right)|\nabla_xf|^2\,{\rm d}v\,{\rm d}x.
	\end{equation*}
	A use of the Young's inequality shows
	\begin{equation*}
		\begin{aligned}
			I_4 &\leq \frac{k}{2}\|\bm{u}\|_{\bm{L^\infty}}\int_{\Omega_x}\int_{\R^3}\,|v|^{k-1}|\nabla_xf|^2\,{\rm d}v\,{\rm d}x + \frac{k}{2}\int_{\Omega_x}\int_{\R^3}\,|v|^k|\nabla_xf|^2\,{\rm d}v\,{\rm d}x
			\\
			&\leq \frac{k}{2}\|\bm{u}\|_{\bm{L^\infty}}\int_{\Omega_x}\int_{\R^3}\,\left(\frac{k-1}{k}|v|^{k} + \frac{1}{k}\right)|\nabla_xf|^2\,{\rm d}v\,{\rm d}x + \frac{k}{2}\int_{\Omega_x}\int_{\R^3}\,|v|^k|\nabla_xf|^2\,{\rm d}v\,{\rm d}x
			\\
			&\leq C\left(1 + \|\bm{u}\|_{\bm{L^\infty}}\right)\int_{\Omega_x}\int_{\R^3}\left( 1 + |v|^k\right)|\nabla_xf|^2\,{\rm d}v\,{\rm d}x.
		\end{aligned}
	\end{equation*}
	A similar computation involving the equation for $\nabla_v f$ yields
	\begin{equation*}
		\begin{aligned}
			\frac{1}{2}\frac{{\rm d}}{{\rm d}t}\int_{\Omega_x}\int_{\R^3}\left(1 + |v|^k\right)|\nabla_vf|^2\,{\rm d}v\,{\rm d}x & \le \int_{\Omega_x}\int_{\R^3}\left(1 + |v|^k\right)\left(|\nabla_xf|^2 + |\nabla_vf|^2\right)\,{\rm d}v\,{\rm d}x
			\\
			& + C\left(1 + \|\bm{u}\|_{\bm{L^\infty}}\right)\int_{\Omega_x}\int_{\R^3}\left(1 + |v|^k\right)|\nabla_vf|^2\,{\rm d}v\,{\rm d}x. 
		\end{aligned}
	\end{equation*}
	Altogether, we obtain
	\begin{equation*}
		\begin{aligned}
			\frac{{\rm d}}{{\rm d}t}&\left(\int_{\Omega_x}\int_{\R^3}\left(1 + |v|^k\right)\left(|\nabla_xf|^2 + |\nabla_vf|^2\right)\,{\rm d}v\,{\rm d}x\right) \leq C\left(1 + \|\bm{u}\|_{\bm{W^{1,\infty}}}\right)
			\\
			&\qquad\qquad\qquad\qquad\qquad\qquad\qquad\int_{\Omega_x}\int_{\R^3}\left(1 + |v|^k\right)\left(|\nabla_xf|^2 + |\nabla_vf|^2\right)\,{\rm d}v\,{\rm d}x.
		\end{aligned}
	\end{equation*}
	A use of Gr\"onwall's inequality shows our desired result.\\
	Our next task it to derive \eqref{momk} and \eqref{moe}. Multiplying equation \eqref{eq:continuous-model} by $|v|^k$, for $k \geq 1$ and integrating in $x,v$ variables yields
	\begin{equation*}
		\partial_t\int_{\Omega_x}m_kf\,{\rm d}x + k\int_{\Omega_x}m_kf\,{\rm d}x = k\int_{\Omega_x}\int_{\R^3} |v|^{k-2}\bm{u}\cdot v f\,{\rm d}v\,{\rm d}x.
	\end{equation*}
	An integration of the above equation in time yields \eqref{momk}. Note that
	\begin{equation*}
		\begin{aligned}
			m_0f = \int_{|v| < R}f{\rm d}v + \int_{|v| \geq R}f {\rm d}v \leq \|f\|_{L^\infty(\R^3)}R^3 + \frac{1}{R^6}\int_{|v| \geq R}|v|^6f {\rm d}v. 
		\end{aligned}
	\end{equation*}
	After choosing $R = \left(\int_{\R^3}|v|^6f {\rm d}v\right)^\frac{1}{9}$, we find that
	\begin{equation*}
		|m_0f| \leq \left(\|f\|_{L^\infty(\R^3)} + 1\right)\left(\int_{\R^3}|v|^6f{\rm d}v\right)^\frac{1}{3}.
	\end{equation*}
	Now, for $k = 1$:
	\begin{equation*}
		\begin{aligned}
			m_1f = \int_{|v| < R}vf{\rm d}v + \int_{|v| \geq R}vf{\rm d}v \leq \|f\|_{L^\infty(\R^3)}R^4 + \frac{1}{R^4}\int_{|v| \geq R}|v|^5f{\rm d}v. 
		\end{aligned}
	\end{equation*}
	Then, choosing $R = \left(\int_{\R^3}|v|^5f{\rm d}v\right)^\frac{1}{8}$, we obtain
	\begin{equation*}
		|m_1f| \leq \left(\|f\|_{L^\infty(\R^3)} + 1\right)\left(\int_{\R^3}|v|^5f{\rm d}v\right)^\frac{1}{2}.
	\end{equation*}
	Thus, we arrive at \eqref{moe} and this concludes the proof.
\end{proof}


\subsection{Proof of the main theorem}

We shall now prove Theorem \ref{thm:exist-strong}.

\begin{proof}[\textbf{Proof of Theorem \ref{thm:exist-strong}}]
	Let $0 <T < \infty$ and set $X := L^\infty(0,T;\bm{J_1})\cap L^2(0,T;\bm{H^2})$, with the norm 
	\[
	\|\bm{u}\|_{X} = \|\bm{u}\|_{L^\infty(0,T;\bm{J_1})} + \|\bm{u}\|_{L^2(0,T;\bm{H^2})}.
	\]
	Let us arbitrarily fix a $f_0$ satisfying \eqref{initial-1}-\eqref{initial-2}-\eqref{initial-3} and let us fix a $\bm{u_0}\in \bm{H^2}\cap\bm{J_1}$. 
	We now consider the map 
	\begin{equation*}
		\begin{aligned}
			\mathcal{T} &: X \rightarrow X
			\\
			\bm{u}^* &\longmapsto \bm{u} = \mathcal{T}(\bm{u}^*)
		\end{aligned}
	\end{equation*}
	defined by the following scheme:
	\begin{itemize}
		\item Solve the Vlasov equation:
		\begin{equation}\label{k}
			\partial_t f + v\cdot\nabla_x f + \nabla_v \cdot\left(\left(\bm{u}^* - v\right)f\right) = 0,
		\end{equation}
		with initial data $f_0$ and with periodic boundary conditions in the $x$ variable.
		\item Solve the Stokes' equation:
		\begin{equation}\label{f}
			\partial_t \bm{u} - \Delta_x \bm{u} +\nabla_x p = \rho V - \rho \bm{u},
		\end{equation}
		with intial data $\bm{u_0}$ and with periodic boundary conditions in the $x$ variable. Here $\rho$ and $\rho V$ are the local density and the local momentum associated with the solution $f$ of \eqref{k}, respectively.
	\end{itemize}
	To begin with, we show that the above map $\mathcal{T}$ is well-defined. For a given $\bm{u}^* \in X$ and a given initial datum $f_0$, the Vlasov equation \eqref{k} is uniquely solvable (see Lemma \ref{fexst} below for details). Having solved \eqref{k} for $f(\bm{u^\ast})$, one gathers that the corresponding local density $\rho\in L^\infty$ (see Lemma \ref{lem:rho}) and the corresponding momentum $\rho V\in L^2$ (see Lemma \ref{density}). Hence, classical theory for the Stokes' problem \cite{ladyzhenskaya1969mathematical} yields a unique solution $\bm{u} \in X$ for the problem \eqref{f}. Thus, the map $\mathcal{T} : X \rightarrow X$ that takes $\bm{u}^*$ to $\mathcal{T}(\bm{u}^*) = \bm{u}$ is well-defined.
	
	Our next step in the proof is to show that $\mathcal{T}$ is a contraction map and that has been demonstrated in Lemma \ref{tcontr} below. Therefore, an application of the Banach fixed-point theorem ensures the existence of a unique solution $(f,\bm{u})$ in a short time interval $(0,T^0)$. As the solution $(f,\bm{u})$ stays bounded at $t = T^0$, thanks to a priori estimates, we can employ continuation argument to extend the interval of existence upto $(0,T]$. As $T$ is arbitrary, we get global-in-time well-posedness of our system.
\end{proof}
Next we deal with Lemmata \ref{fexst} and \ref{tcontr} which played a crucial role in the above proof.

\begin{lem}\label{fexst}
	Let $\bm{u}^* \in X$ and let $f_0 \in L^1(\Omega_x\times\R^3) \cap L^\infty(\Omega_x\times\R^3)$. Then, there exists a unique solution $f \in L^\infty(0,T; L^1(\Omega_x\times\R^3)\cap L^\infty(\Omega_x\times\R^3))$ to \eqref{k}.
\end{lem}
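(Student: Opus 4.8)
The plan is to read \eqref{k} as a linear continuity (transport) equation in the phase space $\Omega_x\times\R^3$ for the unknown $f$, driven by the vector field $b(t,x,v):=(v,\bm{u}^*(t,x)-v)$, and to invoke the DiPerna--Lions theory of renormalized solutions \cite{diperna1989ordinary}, which is designed exactly for vector fields of Sobolev regularity such as those coming from $\bm{u}^*\in X$. First I would record \eqref{k} both in conservative form $\partial_t f+\nabla_{x,v}\cdot(bf)=0$ and in non-conservative form $\partial_t f+v\cdot\nabla_x f+(\bm{u}^*-v)\cdot\nabla_v f=3f$, the source term arising from $\nabla_{x,v}\cdot b=\nabla_v\cdot(-v)=-3$.

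The main work is then to verify the hypotheses of the DiPerna--Lions existence/uniqueness theorem for $b$ on $(0,T)\times\Omega_x\times\R^3$. \emph{(a) Local Sobolev regularity:} $\nabla_v b$ is constant while $\nabla_x b=(0,\nabla_x\bm{u}^*)$, and since $X\hookrightarrow L^2(0,T;\bm{H^2})$ with $H^2(\Omega_x)\hookrightarrow W^{1,6}(\Omega_x)$ in three dimensions, $\nabla_x\bm{u}^*\in L^2(0,T;\bm{L^6})\subset L^1(0,T;\bm{L^1_{\mathrm{loc}}})$, so $b\in L^1(0,T;W^{1,1}_{\mathrm{loc}}(\Omega_x\times\R^3))$. \emph{(b) Sublinear growth:} $\tfrac{|b(t,x,v)|}{1+|v|}\le 1+\|\bm{u}^*(t)\|_{\bm{L^\infty}}$, and because $H^2(\Omega_x)\hookrightarrow L^\infty(\Omega_x)$ we have $\|\bm{u}^*(\cdot)\|_{\bm{L^\infty}}\in L^2(0,T)\subset L^1(0,T)$; hence $\tfrac{b}{1+|v|}\in L^1(0,T;L^\infty(\Omega_x\times\R^3))$ (no weight is needed in the bounded torus directions). \emph{(c) Divergence bound:} $\nabla_{x,v}\cdot b=-3\in L^\infty$. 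Under exactly these conditions, \cite{diperna1989ordinary} provides a unique renormalized solution $f\in C([0,T];L^1_{\mathrm{loc}}(\Omega_x\times\R^3))$ of \eqref{k} with $f|_{t=0}=f_0$, together with an a.e.-invertible regular Lagrangian flow $\Phi_t$ whose Jacobian equals $e^{-3t}$.

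Next I would upgrade this to the class claimed in the statement. Setting $g:=e^{-3t}f$, one checks that $g$ solves the pure transport equation $\partial_t g+v\cdot\nabla_x g+(\bm{u}^*-v)\cdot\nabla_v g=0$, which is again renormalized; since $g(t,\cdot)=g_0\circ\Phi_t^{-1}$ up to a null set, $\|f(t)\|_{L^\infty}=e^{3t}\|g(t)\|_{L^\infty}=e^{3t}\|f_0\|_{L^\infty}$. Using the renormalization identity with $\beta(s)=|s|$ (approximated, then a truncation in $v$ to justify the integrations by parts on the torus$\times\R^3$) together with $\nabla_{x,v}\cdot b=-3$ gives $\tfrac{d}{dt}\int|g|=-3\int|g|$, hence $\|f(t)\|_{L^1}=\|f_0\|_{L^1}$; taking $\beta(s)=s^-$ propagates $f\ge0$ from $f_0\ge0$. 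Altogether $f\in L^\infty(0,T;L^1(\Omega_x\times\R^3)\cap L^\infty(\Omega_x\times\R^3))$. For uniqueness, any solution in this class is automatically renormalized — this is the DiPerna--Lions commutator lemma, which applies because $b\in L^1(0,T;W^{1,1}_{\mathrm{loc}})$ has $\nabla_{x,v}\cdot b\in L^1(0,T;L^\infty)$ — so two solutions with the same data coincide; equivalently one can run the standard duality argument against the backward transport equation with a mollified field $b_\eps$ and pass to the limit.

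I expect the only genuinely nontrivial point (everything else being a citation) to be reconciling the linear growth of $b$ in the $v$-variable with the DiPerna--Lions framework on the unbounded velocity domain, and in particular securing the time-integrability $\bm{u}^*\in L^1(0,T;\bm{L^\infty})$ that feeds the growth condition; this is precisely why the $L^2(0,T;\bm{H^2})$ component of the norm of $X$ is used, the $L^\infty(0,T;\bm{H^1})$ part alone being insufficient to embed into $L^1(0,T;\bm{L^\infty})$ in three dimensions. An entirely parallel and essentially equivalent route, if one prefers to be self-contained, is to mollify $\bm{u}^*$ in $x$, solve \eqref{k} by classical characteristics for the Lipschitz field $b_\eps$ (so $f^\eps=e^{3t}f_0\circ\Phi^{\eps,-1}_t$), derive the uniform bounds $\|f^\eps(t)\|_{L^\infty}\le e^{3t}\|f_0\|_{L^\infty}$ and $\|f^\eps(t)\|_{L^1}=\|f_0\|_{L^1}$, extract a weak-$*$ limit, and identify it with the DiPerna--Lions solution.
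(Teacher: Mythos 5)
Your proposal is correct and follows essentially the same route as the paper: rewrite \eqref{k} as a transport equation for the phase-space field $b=(v,\bm{u}^*-v)$, check the local Sobolev regularity of $b$, the bound $\nabla_{x,v}\cdot b=-3\in L^\infty$, and the sublinear growth condition, and then invoke the DiPerna--Lions theory \cite{diperna1989ordinary}. Your write-up is simply more detailed than the paper's (which cites Corollaries II-1 and II-2 of \cite{diperna1989ordinary} directly), and in fact more careful on one point: the growth condition really uses $\bm{u}^*\in L^1(0,T;\bm{L^\infty})$ coming from the $L^2(0,T;\bm{H^2})$ part of the norm of $X$, rather than the paper's blunter assertion that $|b|/(1+|v|)$ is bounded.
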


\begin{proof}
	Note that \eqref{k} can be rewritten as 
	\begin{equation*}
		\partial_t f + b \cdot \nabla_{x,v}f - 3f =0,
	\end{equation*}
	where $b = (v, \bm{u}^* - v)$, which lies in 
	\begin{equation*}
		L^1(0,T;H^1(\Omega_x \times (-K,K)^3)), \quad 0 < K < \infty.
	\end{equation*}
	Note that div$_{x,v}b = -3 \in L^\infty((0,T) \times \Omega_x\times\R^3)$. Furthermore, $|b|/(1 + |v|)$ is bounded. This setting appeals to the general results in \cite{diperna1989ordinary}. In particular, we can apply \cite[Corollaries II-1 and II-2, p.518]{diperna1989ordinary} to arrive at the existence of the unique solution.
\end{proof}


\begin{lem}\label{tcontr}
	The map $\mathcal{T}$ defined by \eqref{k} and \eqref{f} is a contraction map.
\end{lem}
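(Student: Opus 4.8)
I would prove the lemma by deriving a Lipschitz estimate for $\mathcal T$ whose constant can be made strictly less than $1$ by shrinking the time horizon. Fix $R>0$ and a ball $B$ in $X\cap L^\infty(0,T;\bm{W^{1,\infty}})$ which, for $T$ small, $\mathcal T$ maps into itself (this is the a priori/continuation bound used in the proof of Theorem \ref{thm:exist-strong}, combined with the Stokes' regularity remarks and Lemma \ref{mkmmm}); in particular, for $\bm u^*\in B$ the Vlasov solution $f(\bm u^*)$ satisfies \eqref{eq:moment-bounds} with constants depending only on $R$. Given $\bm u_1^*,\bm u_2^*\in B$, set $f_i:=f(\bm u_i^*)$, $\bm u_i:=\mathcal T(\bm u_i^*)$, $g:=f_1-f_2$, $\bm w:=\bm u_1-\bm u_2$, and fix an exponent $k$ with $5<k\le 9+\delta$. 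The goal is $\|\bm w\|_X\le C(R)\sqrt T\,\|\bm u_1^*-\bm u_2^*\|_X$.

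The first, and main, step is a weighted $L^2$-estimate on $g$. Subtracting the two copies of \eqref{k},
\[
\partial_t g+v\cdot\nabla_x g+\nabla_v\cdot\big((\bm u_1^*-v)g\big)=-(\bm u_1^*-\bm u_2^*)\cdot\nabla_v f_2,\qquad g(0,\cdot)=0 .
\]
I would multiply by $(1+|v|^k)g$ and integrate in $(x,v)$: the $x$-transport term drops, and the drag terms are controlled by $C(1+\|\bm u_1^*\|_{\bm{L^\infty}})\int(1+|v|^k)g^2$ via the integrations by parts already carried out for $\nabla_x f$ and $\nabla_v f$ in the proof of Lemma \ref{mkmmm}. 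For the forcing I would use the Cauchy--Schwarz inequality in $(x,v)$, not Young's: writing $y(t):=\int(1+|v|^k)g^2\,{\rm d}v\,{\rm d}x$, the forcing contributes at most $\|\bm u_1^*-\bm u_2^*\|_{\bm{L^\infty}}\,y^{1/2}\big(\int(1+|v|^k)|\nabla_v f_2|^2\,{\rm d}v\,{\rm d}x\big)^{1/2}$, with the last factor $\le C(R)$ by Lemma \ref{mkmmm}. This yields $y'\le a(t)y+b(t)y^{1/2}$ with $\|a\|_{L^1(0,T)}$ bounded on $B$ and $b(t)\le C(R)\|\bm u_1^*(t)-\bm u_2^*(t)\|_{\bm{L^\infty}}$; since $y(0)=0$, the substitution $z=y^{1/2}$ produces the \emph{linear} inequality $z'\le\tfrac12 a z+\tfrac12 b$, so Gr\"onwall and the embedding $\bm{H^2}\hookrightarrow\bm{L^\infty}$ give $\sup_{[0,T]}y^{1/2}\le C(R)\|\bm u_1^*-\bm u_2^*\|_{L^1(0,T;\bm{L^\infty})}\le C(R)\sqrt T\,\|\bm u_1^*-\bm u_2^*\|_X$.

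The second step feeds this into the Stokes' system. Since $\rho_1-\rho_2=\int_{\R^3}g\,{\rm d}v$ and $\rho_1V_1-\rho_2V_2=\int_{\R^3}vg\,{\rm d}v$, Cauchy--Schwarz in $v$ against $(1+|v|^k)^{-1}$ and $|v|^2(1+|v|^k)^{-1}$ (both integrable exactly because $k>5$) bounds $\|\rho_1-\rho_2\|_{L^2(\Omega_x)}^2+\|\rho_1V_1-\rho_2V_2\|_{L^2(\Omega_x)}^2$ by $C\int(1+|v|^k)g^2\,{\rm d}v\,{\rm d}x$; with the previous step and $\|\bm u_2\|_{L^2(0,T;\bm{L^\infty})}\le C(R)$ this gives $\|\rho_1V_1-\rho_2V_2\|_{L^2((0,T)\times\Omega_x)}+\|(\rho_1-\rho_2)\bm u_2\|_{L^2((0,T)\times\Omega_x)}\le C(R)\sqrt T\,\|\bm u_1^*-\bm u_2^*\|_X$. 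Subtracting the two copies of \eqref{f} and writing $\rho_1\bm u_1-\rho_2\bm u_2=\rho_1\bm w+(\rho_1-\rho_2)\bm u_2$,
\[
\partial_t\bm w-\Delta_x\bm w+\nabla_x(p_1-p_2)=(\rho_1V_1-\rho_2V_2)-(\rho_1-\rho_2)\bm u_2-\rho_1\bm w,\quad \nabla_x\cdot\bm w=0,\quad \bm w(0,\cdot)=0 .
\]
Maximal $L^2$-regularity for the periodic Stokes' operator \cite{giga1991abstract,amrouche1991existence,ladyzhenskaya1969mathematical}, together with $\bm w(0)=0$, bounds $\|\bm w\|_X$ by the $L^2(0,T;\bm{L^2})$-norm of the right-hand side; using the estimate just obtained, $\|\rho_1\|_{L^\infty((0,T)\times\Omega_x)}\le C$ (Lemma \ref{lem:rho}, a bound independent of $\bm u_1^*$), and $\|\bm w\|_{L^2((0,T)\times\Omega_x)}\le\sqrt T\,\|\bm w\|_X$, this reads $\|\bm w\|_X\le C(R)\sqrt T\,\|\bm u_1^*-\bm u_2^*\|_X+C\sqrt T\,\|\bm w\|_X$. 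Choosing $T$ small enough that $C\sqrt T\le\tfrac12$ and then $2C(R)\sqrt T<1$ makes $\mathcal T$ a contraction on $B$.

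The hard part is the first step: with the naive estimate — Young's inequality on the forcing and no control of $\nabla_v f_2$ — one only obtains $\sup_{[0,T]}y^{1/2}\le C(R)\,\|\bm u_1^*-\bm u_2^*\|_X^{1/2}$, which is H\"older with exponent $1/2$ and too weak to contract. The remedy is two-fold: bound the forcing by Cauchy--Schwarz so that it appears multiplied by $y^{1/2}$ (which, since $y(0)=0$, becomes a genuinely linear estimate after setting $z=y^{1/2}$), and absorb $\big(\int(1+|v|^k)|\nabla_v f_2|^2\big)^{1/2}$ into the constant via the propagated derivative-moment bound of Lemma \ref{mkmmm} — precisely the point where the hypothesis \eqref{initial-3} on $\nabla_v f_0$ is indispensable and where the argument of \cite{yu2013global} is completed. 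The only other delicate point, absorbing the $\rho_1\bm w$ term in the Stokes' forcing, is harmless: $\|\rho_1\|_{L^\infty}$ is bounded independently of the data, and that term carries a factor $\sqrt T$.
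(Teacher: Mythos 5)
Your proof is correct, but it takes a genuinely different route from the paper's. The paper never forms a weighted $L^2$ quantity in $\bar f=f_1-f_2$: it estimates the weighted $L^1$ moments $\int|v|^k|\bar f|\,{\rm d}v\,{\rm d}x$ by testing the difference equation against $|v|^k\bar f/\sqrt{\bar f^2+\delta}$, passing to the limit $\delta\to0$, and running a recurrence in $k$ (equations \eqref{231}, \eqref{251}, \eqref{251-1}); the source terms for Stokes' are then controlled through the interpolation bounds \eqref{191}--\eqref{201}, which convert $\Vert m_1\bar f\Vert_{L^2_x}$ and $\Vert m_0\bar f\Vert_{L^3_x}$ into fifth and sixth moments of $|\bar f|$. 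You instead propagate $y(t)=\int(1+|v|^k)(f_1-f_2)^2\,{\rm d}v\,{\rm d}x$ with $5<k\le 9+\delta$, recover $\rho_1-\rho_2$ and $\rho_1V_1-\rho_2V_2$ in $L^2_x$ by Cauchy--Schwarz in $v$ against integrable weights, and get linearity in $\Vert\bar{\bm u}^*\Vert_X$ via the substitution $z=y^{1/2}$ with $y(0)=0$. What each approach buys: yours avoids the $\delta$-regularization entirely --- in particular the terms $T^1_k$, $T^2_k$ whose vanishing as $\delta\to0$ is the delicate point in the paper --- and avoids the moment recurrence; the paper's $L^1$ framework avoids your need for $k>5$ weight integrability and never produces a sublinear $y^{1/2}$ term in the first place. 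Notably, both arguments consume the derivative-moment hypothesis \eqref{initial-3} through Lemma \ref{mkmmm}, just at different places: the paper to kill $T^1_k,T^2_k$, you to bound $\bigl(\int(1+|v|^k)|\nabla_vf_2|^2\bigr)^{1/2}$ in the forcing --- so your proof confirms the paper's claim in Remark \ref{rem:Yu-mistake} that this hypothesis is what closes the contraction. You are also more careful than the paper on one point: Lemma \ref{mkmmm} requires $\bm u^*\in L^\infty(0,T;\bm{W^{1,\infty}})$, which a generic element of $X$ does not satisfy, and your restriction to an invariant ball $B\subset X\cap L^\infty(0,T;\bm{W^{1,\infty}})$ (closed in $X$ by weak-$*$ compactness) makes the fixed-point setting precise where the paper leaves it implicit. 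Your use of $\Vert\rho_1\Vert_{L^\infty}$ from Lemma \ref{lem:rho} to absorb $\rho_1\bm w$, in place of the paper's $\Vert m_0f_1\Vert_{L^\infty(0,T;L^3)}$ bound \eqref{moe}, is an equivalent minor variation.
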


\begin{proof}
Take $\bm{u}_1^*, \bm{u}_2^*\in X$. Let $f_i$ be the unique solution to \eqref{k} for a given $\bm{u}_i^*\in X$. Define $\bar{\bm{u}} = \bm{u}_1 - \bm{u}_2, \bar{\bm{u}}^* = \bm{u}_1^* - \bm{u}_2^*$ and $\bar{f} = f_1 - f_2$, then from \eqref{k}-\eqref{f} we find that
	\begin{equation}\label{131}
		\bar{f}_t + v\cdot\nabla_x\bar{f} + \nabla_v\cdot\left(\bar{\bm{u}}^*f_1 + \bm{u}_2^*\bar{f} - v\bar{f}\right) = 0,
	\end{equation}
	and
	\begin{equation}\label{141}
		\left\{
		\begin{aligned}
			\partial_t \bar{\bm{u}} - \Delta_x \bar{\bm{u}} +\nabla_x \bar{p} &= \int_{\R^3}\left(v\bar{f} - \bm{u}_2\bar{f} - \bar{\bm{u}}f_1\right)\,{\rm d}v,
			\\
			\nabla_x\cdot\bar{\bm{u}} &= 0
		\end{aligned}
		\right.
	\end{equation}
	with initial data
	\begin{equation*}
		\bar{f}(0,x,v) = 0, \qquad \bar{\bm{u}}(0,x) = 0.
	\end{equation*}
	Stokes' regularity \cite{giga1991abstract, amrouche1991existence} yields
	\begin{equation}\label{151}
		\begin{aligned}
			\|\bar{\bm{u}}\|^2_X \leq C \,\left\Vert\int_{\R^3}\left(v\bar{f} + \bm{u}_2\bar{f} - \bar{\bm{u}}f_1\right)\,{\rm d}v\right\Vert^2_{L^2((0,T)\times \Omega_x)}
		\end{aligned}
	\end{equation}
	Now, the H\"older inequality followed by Sobolev imbedding shows
	\begin{equation}\label{161}
		\begin{aligned}
			& \left\Vert\int_{\R^3}\left(v\bar{f} - \bm{u}_2\bar{f} - \bar{\bm{u}}f_1\right)\,{\rm d}v\right\Vert_{L^2([0,T]\times \Omega_x)} \leq \left\Vert\int_{\R^3}v\bar{f}\,{\rm d}v\right\Vert_{L^2([0,T]\times \Omega_x)} 
			\\
			& \hspace{2 cm} + T^{\frac{1}{6}}\|\bm{u}_2\|_{X}\left\Vert\int_{\R^3}\bar{f}\,{\rm d}v\right\Vert_{L^3([0,T] \times \Omega_x)} + T^\frac{1}{2}\|\bar{\bm{u}}\|_{X}\|m_0f_1\|_{L^\infty(0,T;L^3(\Omega_x))}.
		\end{aligned}
	\end{equation}
	For a sufficiently small $T > 0$, there holds
	\begin{equation}\label{171}
		\begin{aligned}
			C\,T\|m_0f_1\|^2_{L^\infty(0,T;L^3(\Omega_x))} \leq \frac{1}{2}.
		\end{aligned}
	\end{equation}
	Hence for such a choice of $T$, we obtain
	\begin{equation}\label{181}
		\|\bar{\bm{u}}\|_{X}^2 \leq C\left\Vert\int_{\R^3}v\bar{f}\,{\rm d}v\right\Vert^2_{L^2([0,T]\times \Omega_x)} + C\|\bm{u}_2\|^2_{X}\left\Vert\int_{\R^3}\bar{f}\,{\rm d}v\right\Vert^2_{L^3([0,T] \times \Omega_x)}.
	\end{equation}
	Now, a similar calculation as in the proof of Lemma \ref{mkmmm} implies
	\begin{equation}\label{191}
		\left\Vert\int_{\R^3}v\bar{f}\,{\rm d}v\right\Vert^2_{L^2([0,T] \times \Omega_x)} \leq C\int_0^T\int_{\R^3}\int_{\Omega_x}\,|v|^5|\bar{f}|\,{\rm d}x\,{\rm d}v\,{\rm d}t,
	\end{equation}
	and
	\begin{equation}\label{201}
		\left\Vert\int_{\R^3}\bar{f}\,{\rm d}v\right\Vert^2_{L^3([0,T]\times \Omega_x)} \leq C\int_0^T\int_{\R^3}\int_{\Omega_x}\,|v|^6|\bar{f}|\,{\rm d}x\,{\rm d}v\,{\rm d}t.
	\end{equation}
	Multiply equation \eqref{131} by $|v|^k\frac{\bar{f}}{\sqrt{\bar{f}^2 + \delta}}$ with $k \geq 1$ and $\delta>0$, to obtain
	\begin{equation*}
		\begin{aligned}
			|v|^k\partial_t\left(\sqrt{\bar{f}^2 + \delta}\right) &+ |v|^k\,v\cdot\nabla_x\left(\sqrt{\bar{f}^2 + \delta}\right) + |v|^k\frac{\bar{f}}{\sqrt{\bar{f}^2 + \delta}}\bar{\bm{u}}^*\cdot\nabla_vf_1 
			\\
			&+ |v|^k\bm{u}_2^*\cdot\nabla_v\left(\sqrt{\bar{f}^2 + \delta}\right) - |v|^k\,\frac{\bar{f}}{\sqrt{\bar{f}^2 + \delta}}\,\nabla_v\cdot\left(v\bar{f}\right) = 0.
		\end{aligned}
	\end{equation*}
	An integrate with respect to $x,v$ shows
	\begin{equation*}
		\begin{aligned}
			\partial_t\int_{\R^3}\int_{\Omega_x}\,& |v|^k\left(\sqrt{\bar{f}^2 + \delta}\right) \,{\rm d}x\,{\rm d}v - k\int_{\R^3}\int_{\Omega_x}\,|v|^{k-2}\,\frac{\bar{f}}{\sqrt{\bar{f}^2 + \delta}}\,f_1\,\bar{\bm{u}}^*\cdot v\,{\rm d}x\,{\rm d}v 
			\\
			&- \int_{\R^3}\int_{\Omega_x}\,|v|^k\,f_1\,\bar{\bm{u}}^*\cdot\nabla_v\left(\frac{\bar{f}}{\sqrt{\bar{f}^2 + \delta}}\right)\,{\rm d}x\,{\rm d}v + k\int_{\R^3}\int_{\Omega_x}\,|v|^k\,\frac{\bar{f}^2}{\sqrt{\bar{f}^2 + \delta}} \,{\rm d}x\,{\rm d}v
			\\
			& - k\int_{\R^3}\int_{\Omega_x}\, |v|^{k-2}\,\left(\sqrt{\bar{f}^2 + \delta}\right)\,\bm{u}_2^*\cdot v\,{\rm d}x\,{\rm d}v 
			\\
			&  + \int_{\R^3}\int_{\Omega_x}\, |v|^k\,\bar{f}\,\nabla_v\left(\frac{\bar{f}}{\sqrt{\bar{f}^2 + \delta}}\right)\cdot v\,{\rm d}x\,{\rm d}v = 0.
		\end{aligned}
	\end{equation*}
	A use of the Sobolev inequality with integration in time yields
	\begin{equation}\label{231}
		\begin{aligned}
			\sup_{t \in [0,T]}\int_{\R^3}\int_{\Omega_x}\,|v|^k&\left(\sqrt{\bar{f}^2 + \delta}\right)\,{\rm d}x\,{\rm d}v + k\int_0^T\int_{\R^3}\int_{\Omega_x}\,|v|^k\frac{\bar{f}^2}{\sqrt{\bar{f}^2 + \delta}}\,{\rm d}x\,{\rm d}v\,{\rm d}t 
			\\
			&\leq 
			\int_0^Tk\|\bar{\bm{u}}^*\|_{\bm{H^2}}\|m_{k-1}f_1\|_{L^1(\Omega_x)} \,{\rm d}t + |T_k^1| + |T^2_k|
			\\
			&\quad+ \int_{0}^T k\|\bm{u}_2^*\|_{\bm{H^2}}\left\Vert\int_{\R^3}|v|^{k-1}\left(\sqrt{\bar{f}^2 + \delta}\right) \,{\rm d}v\right\Vert_{L^1(\Omega_x)}\,{\rm d}t 
			\\
			&\leq T^{\frac{1}{2}}\|\bar{\bm{u}}^*\|_{X}\|m_{k-1}f_1\|_{L^\infty(0,T;L^1(\Omega_x))} + |T_k^1| + |T^2_k|
			\\
			&\,\,+ T^\frac{1}{2}\|\bm{u}_2^*\|_{X}\left\Vert\int_{\R^3}|v|^{k-1}\left(\sqrt{\bar{f}^2 + \delta}\right)\,{\rm d}v\right\Vert_{L^\infty(0,T;L^1(\Omega_x))}.
		\end{aligned}
	\end{equation}
	Here
	\begin{equation*}
		T^1_k = \int_0^T\int_{\R^3}\int_{\Omega_x}\,|v|^k\,f_1\,\bar{\bm{u}}^*\cdot\nabla_v\left(\frac{\bar{f}}{\sqrt{\bar{f}^2 + \delta}}\right)\,{\rm d}x\,{\rm d}v\,{\rm d}t = \int_0^T\int_{\R^3}\int_{\Omega_x}\,|v|^k\,f_1\,\bar{\bm{u}}^*\cdot\frac{\delta\,\nabla_v\bar{f}}{\left(\bar{f}^2+\delta\right)^\frac{3}{2}}\,{\rm d}x\,{\rm d}v\,{\rm d}t
	\end{equation*}
	and
	\begin{equation*}
		T^2_k = \int_0^T\int_{\R^3}\int_{\Omega_x}\, |v|^k\nabla_v\left(\frac{\bar{f}}{\sqrt{\bar{f}^2 + \delta}}\right)\cdot v\bar{f}\,{\rm d}x\,{\rm d}v\,{\rm d}t = \int_0^T\int_{\R^3}\int_{\Omega_x}\, |v|^k\frac{\delta\,\nabla_v\bar{f}}{\left(\bar{f}^2+\delta\right)^\frac{3}{2}}\cdot v\bar{f}\,{\rm d}x\,{\rm d}v\,{\rm d}t .
	\end{equation*}
	As $\bar{f}\in L^1(0,T;L^\infty(\Omega_x\times\R^3))$ and as fifth order velocity moments of $\vert \nabla_v \bar{f}\vert^2$ and $\bar{f}$ are bounded (see Lemma \ref{mkmmm}), $\vert T^1_k\vert\to0$ and $\vert T^2_k\vert\to0$ as $\delta\to0$ for $k=1,2,3,4,5,6$. Next, we multiply equation \eqref{131} by $\frac{\bar{f}}{\sqrt{\bar{f}^2+\delta}}$ and integrate with respect to $x,v$ and $t$ variables to obtain
	\begin{equation*}
		\begin{aligned}
			\int_{\R^3}\int_{\Omega_x}\,\sqrt{\bar{f}^2+\delta}\,{\rm d}x\,{\rm d}v - \int_0^T\int_{\R^3}\int_{\Omega_x}\,\nabla_v\left(\frac{\bar{f}}{\sqrt{\bar{f}^2+\delta}}\right)&\cdot\left(\bar{\bm{u}}^*f_1 + \bm{u}_2^*\bar{f} - v\bar{f}\right)\,{\rm d}x\,{\rm d}v 
			\\
			&= \int_{\R^3}\int_{\Omega_x}\,\sqrt{\bar{f}^2(0,x,v)+\delta}\,{\rm d}x\,{\rm d}v.
		\end{aligned}
	\end{equation*}
	Note that $\bar{f}(0,x,v) = 0$ and $\nabla_v\left(\frac{\bar{f}}{\sqrt{\bar{f}^2+\delta}}\right) = \frac{\delta\,\nabla_v\bar{f}}{\left(\bar{f}^2+\delta\right)^\frac{3}{2}}$. Hence, arguing as we did with the $T^1_k$ and $T^2_k$ terms, in the $\delta \rightarrow 0$ limit, the above equation yields
	\begin{equation}\label{eq:Yu-mistake}
		\int_{\R^3}\int_{\Omega_x}\,|\bar{f}|\,{\rm d}x\,{\rm d}v = 0.
	\end{equation}
	Using the recurrence relation in \eqref{231}, we arrive at
	\begin{equation}\label{251}
		\begin{aligned}
			\int_0^T\int_{\R^3}\int_{\Omega_x}&\,|v|^5\,|\bar{f}|\,{\rm d}x\,{\rm d}v\,{\rm d}t \lesssim T^\frac{1}{2}\|\bar{\bm{u}}^*\|_{X}\left(\|m_4f_1\|_{L^\infty(0,T;L^1(\Omega_x))} \right.\\
			&\left. \quad+ \|\bm{u}_2^*\|_{X}\|m_3f_1\|_{L^\infty(0,T;L^1(\Omega_x))} + \|\bm{u}_2^*\|^2_{X}\|m_2f_1\|_{L^\infty(0,T;L^1(\Omega_x))} \right.
			\\
			&\left. \quad + \|\bm{u}_2^*\|^3_{X}\|m_1f_1\|_{L^\infty(0,T;L^1(\Omega_x))} + \|\bm{u}_2^*\|^4_{X}\|m_0f_1\|_{L^\infty(0,T;L^1(\Omega_x))}\right),
		\end{aligned}
	\end{equation}
and
\begin{equation}\label{251-1}
	\begin{aligned}
		\int_0^T\int_{\R^3}\int_{\Omega_x}&\,|v|^6\,|\bar{f}|\,{\rm d}x\,{\rm d}v\,{\rm d}t \lesssim T^\frac{1}{2}\|\bar{\bm{u}}^*\|_{X}\left(\|m_5f_1\|_{L^\infty(0,T;L^1(\Omega_x))} \right.\\
		&\left. \quad+ \|\bm{u}_2^*\|_{X}\|m_4f_1\|_{L^\infty(0,T;L^1(\Omega_x))} + \|\bm{u}_2^*\|^2_{X}\|m_3f_1\|_{L^\infty(0,T;L^1(\Omega_x))} \right.
		\\
		&\left. \quad + \|\bm{u}_2^*\|^3_{X}\|m_2f_1\|_{L^\infty(0,T;L^1(\Omega_x))} + \|\bm{u}_2^*\|^4_{X}\|m_1f_1\|_{L^\infty(0,T;L^1(\Omega_x))} \right.
		\\
		& \left. \quad + \|\bm{u}_2^*\|^5_{X}\|m_0f_1\|_{L^\infty(0,T;L^1(\Omega_x))}\right).
	\end{aligned}
\end{equation}
	Using \eqref{191}, \eqref{201}, \eqref{251} and \eqref{251-1} in \eqref{181} while employing \eqref{momk} for handling $m_kf_1$ terms, for sufficiently small $T > 0$, we obtain
	\[
	\|\bar{\bm{u}}\|_{X} \leq \alpha\|\bar{\bm{u}}^*\|_{X}, \quad \mbox{ for some }\quad \alpha \in (0,1).
	\]
	This shows that $\mathcal{T}$ is a contraction map. 
\end{proof}

\begin{rem}\label{rem:Yu-mistake}
In \cite{yu2013global}, the author treats the difference $\overline{f}:= f_1 - f_2$ as non-negative (see, in particular, the two inequalities at the end of page 290 in \cite{yu2013global}). This is a misstep and the above proof fixes that. The correct versions of those two inequalities in three dimensions are given above (see \eqref{191} and \eqref{201}). Furthermore, in our above analysis, we encountered the terms $T^1_k$ and $T^2_k$. To understand their behaviours in the $\delta\to0$ limit requires the boundedness property of the velocity moments associated with the first-order derivatives of the distribution function. Such a bound was established in Lemma \ref{mkmmm} above. It isn't very clear if one can prove the map $\mathcal{T}$ is a contraction with the assumptions on the initial datum only analogous to those in \cite{yu2013global}.
\end{rem}

\begin{rem}\label{rem:Hofer-proof}
H\"ofer in \cite{hofer2018} sets up the proof of well-posedness in a similar fashion (similar to the above proof of Theorem \ref{thm:exist-strong}). In our scheme, we solve the Vlasov equation for a fixed $\bm{u^\ast}$ followed by solving the unsteady Stokes' equation with the local density and local momentum associated with the solution $f(\bm{u^\ast})$. In \cite{hofer2018}, however, the author's scheme is to solve the steady Stokes' equation for a fixed $f^\ast\in W^{1,\infty}((0,T)\times \R^3 \times \R^3)$ followed by solving the Vlasov equation with the fluid velocity $\bm{u}(f^\ast)$. The contraction property is demonstrated by analysis the Vlasov equation. Hence, it goes via the analysis of the characteristics and the Banach space where the contraction property is established turns out to be $W^{1,\infty}((0,T)\times \R^3 \times \R^3)$.
\end{rem}


\bibliography{references}
\bibliographystyle{amsalpha}


\end{document}